\definecolor{darkred}{RGB}{200,0,0}
\long\def\symbolfootnote[#1]#2{\begingroup
\def\thefootnote{\fnsymbol{footnote}}\footnote[#1]{#2}\endgroup}
\newtheorem{theorem}{Theorem}[section]
\newtheorem{lemma}[theorem]{Lemma}
\newtheorem{cor}[theorem]{Corollary}
\newtheorem*{question*}{Question}
\theoremstyle{remark}
\newtheorem*{claim*}{Claim}
\newtheorem{remark}[theorem]{Remark}
\newtheorem{example}[theorem]{Example}
\newtheorem*{remark*}{Remark}
\newtheorem*{example*}{Example}
\newtheorem*{fact*}{Fact}
\newtheoremstyle{named}{}{}{\itshape}{}{\bfseries}{.}{.5em}{\thmnote{#3}}
\theoremstyle{named}
\newtheorem*{namedtheorem}{Theorem}
\newtheorem*{namedlemma}{Lemma}
\theoremstyle{plain}
\theoremstyle{definition}
\newcommand{\executeiffilenewer}[3]{%
\ifnum\pdfstrcmp{\pdffilemoddate{#1}}%
{\pdffilemoddate{#2}}>0%
{\immediate\write18{#3}}\fi%
}
\newcommand{%
\executeiffilenewer{.svg}{.pdf}%
{inkscape -z -D --file=.svg %
--export-pdf=.pdf --export-latex}%
\input{.pdf_tex}%
}[1]{%
\executeiffilenewer{#1.svg}{#1.pdf}%
{inkscape -z -D --file=#1.svg %
--export-pdf=#1.pdf --export-latex}%
\input{#1.pdf_tex}%
}
\begin{document}

\title{Books, Hallways and Social Butterflies:\\ A Note on Sliding Block Puzzles}

\author[F.~Brunck]{Florestan Brunck}
\author[M.~Kwan]{Matthew Kwan}

\address{Institute of Science and Technology Austria\\
 Am Campus 1, 3400, Klosterneuburg, Lower Austria, Austria}
\email{florestan.brunck@ist.ac.at}
\email{matthew.kwan@ist.ac.at}

\setlength{\headheight}{2em}
\setlength{\footskip}{2.5em}
\fancyhf{}
\fancyfoot[C]{\thepage}
\fancyhead[CO]{\textsc{f.~brunck} \& \textsc{m.~kwan}}
\fancyhead[CE]{\textsc{\small{books, hallways and social butterflies: a note on sliding block puzzles}}}
\pagestyle{fancy}

\maketitle

\begin{abstract}

\noindent Recall the classical \emph{15-puzzle}, consisting of 15 sliding blocks in a $4\times 4$ grid. Famously, the configuration space of this puzzle consists of two connected components, corresponding to the odd and even permutations of the symmetric group $S_{15}$. In 1974, Wilson generalised sliding block puzzles beyond the $4\times 4$ grid to arbitrary graphs (considering $n-1$ sliding blocks on a graph  with $n$ vertices), and characterised the graphs for which the corresponding configuration space is connected.

In this work, we extend Wilson's characterisation to sliding block puzzles with an arbitrary number of blocks (potentially leaving more than one empty vertex). For any graph, we determine how many empty vertices are necessary to connect the corresponding configuration space, and more generally we provide an algorithm to determine whether any two configurations are connected. Our work may also be interpreted within the framework of \emph{Friends and Strangers graphs}, where empty vertices correspond to ``social butterflies'' and sliding blocks to ``asocial'' people.
\end{abstract}

\section{Introduction}
\label{intro}

Interest in sliding block puzzles dates back to the \emph{15-puzzle}, seemingly invented by Noyes Chapman in 1874 (see \cite{15} for an account of the fascinating history of the puzzle). The game consists of fifteen movable square blocks numbered $1,2,\ldots, 15$ and arranged within a 4 by 4 square box, leaving one empty space (see \cref{fig:15}). The task at hand is to start from a given configuration of the numbered blocks and reach the desired target configuration, where the only allowed move is to slide a numbered block into an adjacent empty space. This task seemed to be unpredictably either very easy to accomplish, or completely impossible, and the puzzle turned into a worldwide sensation in the spring of 1880. A particularly challenging instance, known as the ``13-15-14 puzzle'', is where the initial and target configuration differ by a single swap (historically this swap involved the blocks labelled 14 and 15). The craze of this puzzle was such that it consistently made newspaper headlines in 1880, with an article of the \emph{New York Times} lamenting it was ``threatening our free institutions'' (\cite[p9]{15}). Various prizes were offered for anyone who would solve this challenge at last --- starting with a \$25 set of teeth and culminating with Sam Loyd's famous \$1,000 cash prize.

\begin{figure}[H]\centering
  \includegraphics[page=8]{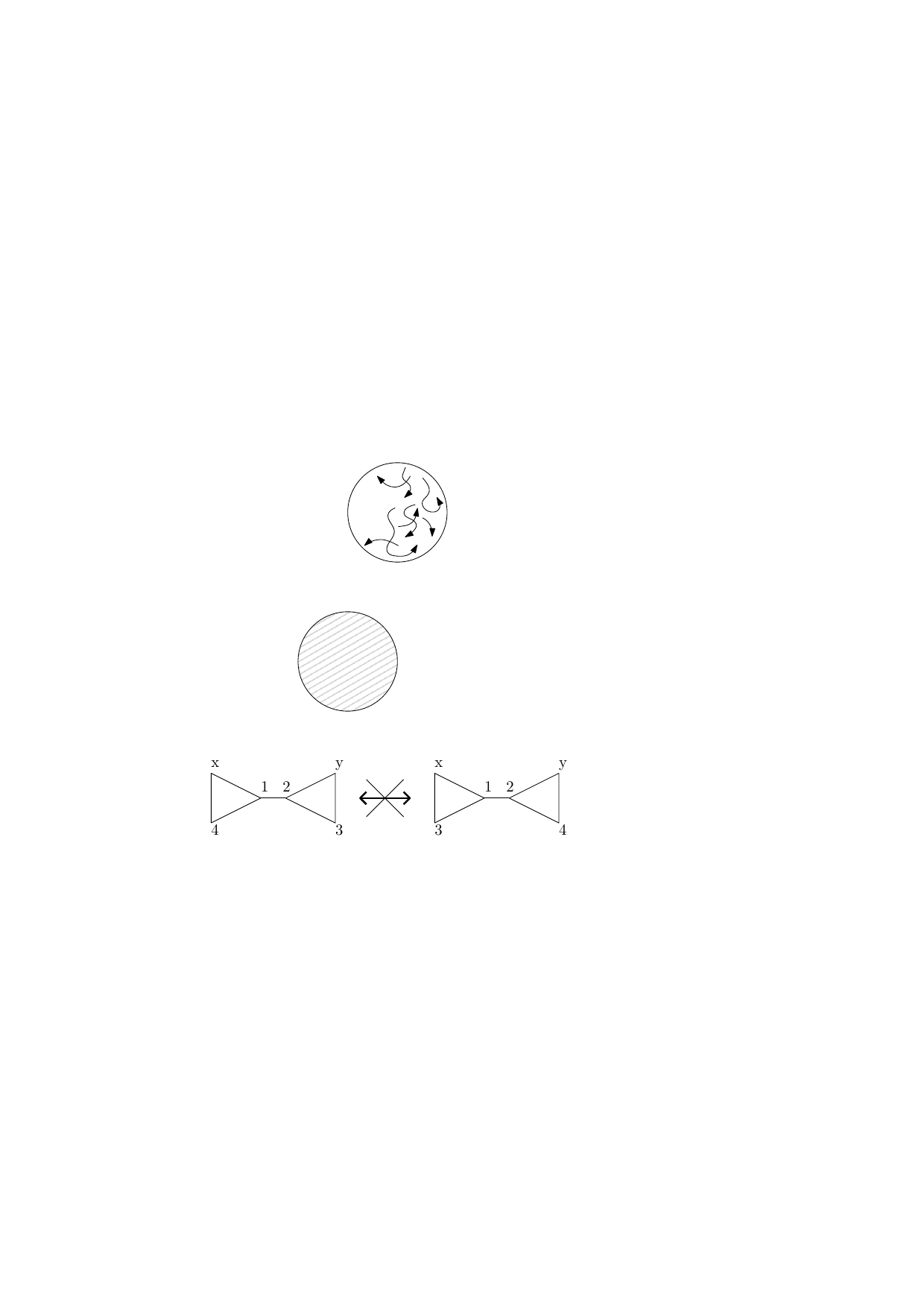}
  \caption{: Two configurations of the 15 puzzle are not necessarily connected by elementary moves, depending on the parity of the initial and target configurations.}
  \label{fig:15}
\end{figure}
\vspace{-1em}

The notorious 13-15-14 challenge is now known to be impossible. The earliest mathematical accounts were published shortly after the puzzle became a sensation. Johnson and Storey (in tandem)~\cite{story} and Schubert~\cite{Schubert} independently realised that each configuration can be assigned a parity, and only configurations of the same parity can be reached from each other.

It is natural to wonder how much of this analysis is dependent on the particular setup of the 15-puzzle. Indeed, over the years, many versions and extensions of the 15-puzzle have been proposed and studied, drawing the attention of those in both recreational and academic communities (including Knuth and Coxeter). See for example \cite{Archer, Coxeter, Spitznagel,Wilson,Yang,Efficient,Varikon,Twist,Permutation}, \cite[Chapter~9 and pp.\ 114--115 of Chapter~7]{15} and \cite[B60, E19 and E20]{hordern}. In \cref{fig:coiled-15} we single out a particular variation on the 15-puzzle, which we shall reuse later on in a few examples.

\begin{figure}[H]\centering
  \includegraphics[page=35]{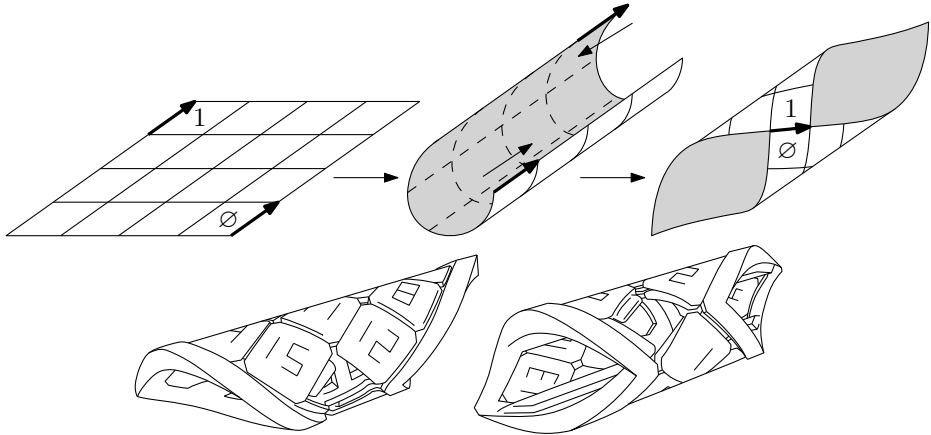}
  \caption{: Henry Segerman's \emph{coiled 15} wraps around the standard $15$-puzzle on a cylinder, connecting the top left square with the bottom right square.}
  \label{fig:coiled-15}
\end{figure}
\vspace{-1em}

Perhaps most famously, in 1974 Wilson~\cite{Wilson} generalised the 15-puzzle to the setting where blocks may be slid around on an \emph{arbitrary graph}. Specifically, we can consider a graph $\mathcal G$ and place a numbered block on all but one of the vertices. Then, the only legal move is to move a block onto an empty adjacent vertex. Wilson gave a complete characterisation of the graphs $\mathcal G$ for which the corresponding configuration space is connected (in the language of graph theory, $\mathcal G$ should be \emph{biconnected} and \emph{non-bipartite}, and not equal to a particular 7-vertex graph called $\theta_0$, see \cref{fig:theta}) 

Wilson's theorem is not entirely the end of the story. As we have discussed, the 15-puzzle does not have a connected configuration space (its corresponding graph $G$ is bipartite). However, if we remove an additional block, leaving \emph{two} empty spaces, then it is easy to show that the configuration space becomes connected. In general, for a given graph it is unclear exactly how much more power is granted by the addition of more empty spaces. In this paper, we set out to understand the full picture for sliding block puzzles in general graphs, with an arbitrary number of empty spaces (see for example \cref{fig:black-and-white}). In particular, we provide a complete answer to the following question.

\begin{figure}[H]\centering
  \includegraphics[page=43]{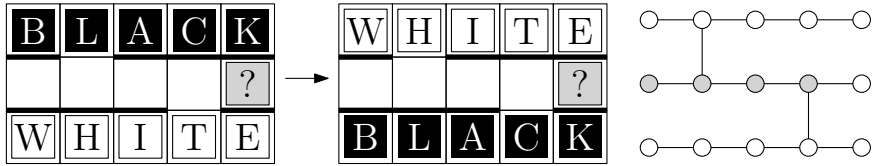}
  \caption{: Nob Yoshigahara's \emph{Black and White} puzzle (1982) is an example of a sliding block puzzle with more than one empty space. The player must interchange the black and white tiles, respecting the order of the letters. Note the immovable barriers between the rows in bold. The corresponding underlying graph $\mathcal G$ is depicted on the right, with the 4 empty spaces highlighted in grey.}
  \label{fig:black-and-white}
\end{figure}
\vspace{-1em}

\begin{question*}
Given an arbitrary graph $\mathcal G$, how many empty vertices are necessary to connect all the configurations of its associated sliding block puzzle?
\end{question*}

\begin{remark}\theoremstyle{remark}
\label{remark-k}
Unsurprisingly, connectedness of the configuration space is a property that is monotone in $k$: if $k$ empty vertices are enough to connect all configurations, any number $k'>k$ of empty vertices is certainly also enough to achieve connectivity. Although intuitively clear, this remark can be seen as a particular case of \cite[Proposition~2.1]{F&S}.
\end{remark}

\begin{remark}
Since making this paper publicly available, we were made aware that the answer to the question above has previously appeared in the literature. Specifically, this result (in different language) is claimed without proof in a paper of Kornhauser, Miller and Spirakis~\cite{Pebbles}, appearing in the conference proceedings of FOCS'84 (which is a conference in theoretical computer science; there are connections to memory management in distributed systems). The proof can be found in Kornhauser's thesis \cite{Thesis}. Nonetheless, we believe our new proof provides a different outlook on this theorem, and we hope that our reframing within the language of Friends and Strangers Graphs might help bring attention to this result from different communities. We also provide a different decision algorithm to check if two puzzles can be reconfigured into each other (see \cref{connected components}, simplifying in our view the work of \cite{Thesis}), together with a new result giving an exact count of the number of connected components of the space of all configurations (\cref{sec:structure}).
\end{remark}

\subsection*{Friends and Strangers Graphs.} Recently, Defant and Kravitz situated sliding block puzzles as a special case of the broader framework of ``friends and strangers graphs''\cite{F&S} (see also \cite{F&S-1,F&S-2,F&S-3,F&S-4,F&S-5,F&S-6,F&S-7,F&S-8}), which we shall recall and adopt in the rest of this article.

Informally, in this new framework, one may think of sliding blocks as people walking on an underlying simple graph $\mathcal G$ (e.g., the 4 by 4 grid graph in the case of the 15 puzzle). Whenever two people find themselves at the endpoints of a common edge of $\mathcal G$, they may swap their position/vertex in $\mathcal G$ if and only if they are friends with one another. In this context, an empty space can be seen as a ``social butterfly'', i.e.\ a person who is friends with everyone else. The friendships are specified using a second graph $\mathcal F$, in which each vertex corresponds to a fixed person, and edges indicate friendships (see \cref{fig:example-15}). In our notation, we make use of capital letters to denote the positions of the people (i.e.\ the vertices of $\mathcal G$) and lowercase letters to refer to the people themselves (i.e.\ the vertices of $\mathcal F$),  and say that a person $x\in\mathtt V(\mathcal F)$ \emph{lies} on a vertex $X\in \mathtt V(\mathcal G)$.

Formally, given two simple graphs $\mathcal G$ and $\mathcal F$ on $n$ vertices, the \textit{friends-and-strangers graph} $\operatorname{FS}(\mathcal G, \mathcal F)$ is the graph whose vertex set is the set of bijections $\sigma : \mathtt V(\mathcal G) \rightarrow \mathtt V(\mathcal F)$, where $\mathtt V(\mathcal G)$ denotes the vertex set of a graph $\mathcal G$. We call such a bijection a \textit{configuration}. Two configurations $\sigma, \sigma ' : \mathtt V(\mathcal G) \rightarrow \mathtt V(\mathcal F)$ are then adjacent in $\operatorname{FS}(\mathcal G, \mathcal F)$ if and only if there exists an edge $\{A, B\}$ in $\mathcal G$ such that:

\begin{itemize}[label=\raisebox{0.25ex}{\tiny$\bullet$}]
 \item  $\{\sigma(A), \sigma(B)\}$ is an edge in $\mathcal F$.
 \item  $\sigma(A) = \sigma'(B)$ and $\sigma(B) = \sigma'(A)$.
 \item $\sigma(C) = \sigma'(C)$, for all $C \in \mathtt V (\mathcal G) - \{A, B\}$.
\end{itemize}

When this is the case, the operation that transforms $\sigma$ into $\sigma'$ is referred to as a \textit{$(\mathcal G, \mathcal F)$-friendly swap across} $\{A, B\}$ in \cite{F&S}, or simply a \textit{swap} across $\{A, B\}$. 

A vertex $v\in \mathtt V (\mathcal F)$ with degree $n-1$ is called a \textit{social butterfly}. On the other hand, a vertex $v\in \mathtt V (\mathcal F)$ who is friends with nobody except social butterflies is called \textit{asocial}. 

\begin{figure}[H]\centering
  \includegraphics[page=28]{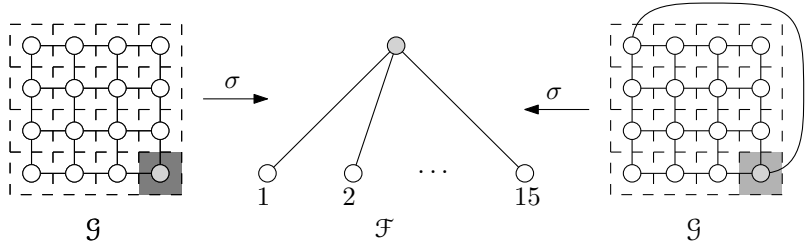}
  \vspace{-1em}
  \caption{: The friendship graph $\mathcal F$ associated to the $15$-puzzle (left) and the coiled $15$-puzzle (right) is the same star graph $\mathcal K_{1,15}$ on $16$ vertices (middle). For each of these puzzles, a configuration is encoded as a bijection from the vertices of $\mathcal G$ to those of $\mathcal F=\mathcal K_{1,15}$.}
  \label{fig:example-15}
\end{figure}
\vspace{-1em}

\begin{example}
\label{example-15}
The 15-puzzle can be encoded by setting $\mathcal G=\operatorname{Path}_4\square \operatorname{Path}_4$ and setting $\mathcal F$ to be the complete bipartite graph $\mathcal K_{1,15}$ (i.e.\ the star graph with $n-1$ edges). In this example, the centre of the star is the unique social butterfly and everyone else is asocial. See \cref{fig:example-15}.
\end{example}

\begin{example}
\label{example-coil}
The coiled 15-puzzle can be encoded using the same friendship graph $\mathcal F=\mathcal K_{1,15}$ and setting $\mathcal G$ to be the $4\times 4$ grid graph $\operatorname{Path}_4\square \operatorname{Path}_4$ to which a single edge joining the top-left and bottom-right vertex is added (for example). See \cref{fig:example-15}.
\end{example}

The central question and results of this article may then be understood under two different guises: either as the reconfiguration of sliding block puzzles using empty spaces, or as the reconfiguration of an asocial crowd walking on a graph, with the help of social butterflies.

In the case of the 15-puzzle --- and more generally in Wilson's paradigm --- the friendship graph $\mathcal F$ turns out to be a star graph (as seen in \cref{example-15} and \cref{example-coil}). When considering multiple social butterflies, the right generalisation of star graphs is that of \textit{generalised book graphs}, as defined for example in \cite{Book}. Given integers $n>k$, we consider the $k$-\textit{book graph on $n$ vertices} $\mathcal B_{n-k}^{(k)}$, consisting of $n-k$ copies of the complete graph $\mathcal K_{k+1}$ on $k+1$ vertices, joined along a common $\mathcal K_k$ (see \cref{fig:book}). 

\begin{figure}[H]\centering
  \includegraphics[page=2]{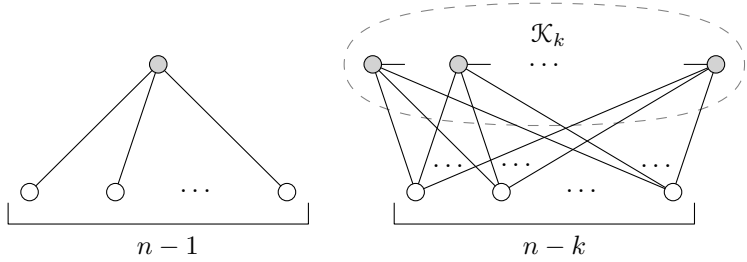}
  \caption{: On the left, the star graph $\mathcal K_{1,n-1}$, alternatively seen as the book graph $\mathcal B_{n-1}^{(1)}$, with its single common vertex $\mathcal K_1$ highlited in grey. On the right, a general $k$-book graph on $n$ vertices, with its common $\mathcal K_k$ indicated in grey.}
  \label{fig:book}
\end{figure}
\vspace{-1em}

\begin{remark*}
Using our terminology, book graphs can be seen as the family of graphs in which every person is either asocial or a social butterfly.
\end{remark*}

We may now reformulate our main question in the language of the friends and strangers graphs:

\begin{question*}[Friends and Strangers reformulation]
Given an arbitrary graph $\mathcal G$ on $n$ vertices, for which values of $k$ is $\operatorname{FS}(\mathcal G,\mathcal B_{n-k}^{(k)})$ connected? In other words, what is the minimal number of social butterflies required to freely reconfigure all the people on $\mathcal G$ if everyone else is asocial?
\end{question*}

 \begin{figure}[H]\centering
  \includegraphics[page=15]{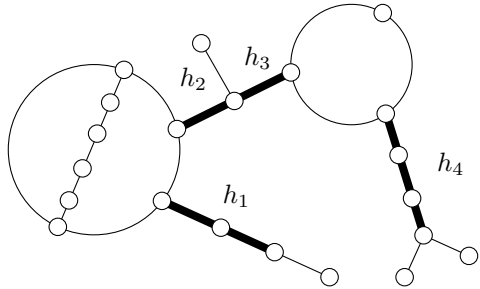}
  \caption{: A graph with $4$ maximal hallways $h_1$, $h_2$, $h_3$ and $h_4$ (highlighted in bold) of order  $3$, $2$, $2$, and $4$ respectively.}
  \label{fig:hallways}
\end{figure}
\vspace{-1em}

 To formulate our Main Theorem, which precisely answers this last question, we first need to define the notion of a \textit{hallway}.  Recall first that a \textit{path} $\mathcal P$ in the graph $\mathcal G$ is a sequence $\mathcal P=(X_0, X_1, \ldots, X_n)$ of distinct vertices of $\mathcal G$ such that, for all $1 \leq i\le n$, $X_{i-1}$ and $X_i$ are adjacent in $\mathcal G$. The path $\mathcal P$ is said to be \emph{bare} if all its inner vertices have degree 2 in $\mathcal G$. A vertex $V\in \mathtt V(\mathcal G)$ is a \textit{cut-vertex} of $\mathcal G$ if removing it increases the number of connected components of $\mathcal G$. A \emph{hallway} of \emph{order} $k$ in $\mathcal G$  is a bare path $\mathcal H=(X_1, X_2, \ldots, X_k)$ in $\mathcal G$ between cut-vertices $X_1$ and $X_k$ that is not part of any cycle (in particular, a single cut-vertex is a hallway of order 1). See \cref{fig:hallways}. A hallway is called \emph{dark} if its order is greater than or equal to $k$ (the relevant value of $k$ will usually be clear from the context).

 \begin{figure}[H]\centering
  \includegraphics[page=16]{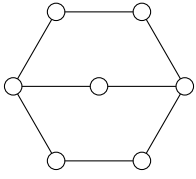}
  \caption{: The $\theta_0$ graph.}
  \label{fig:theta}
\end{figure}
\vspace{-1em}

Let us denote by $\kappa(\mathcal G)$ the maximal order of a hallway in $\mathcal G$. We then define the function $\kappa^*:\mathcal G\rightarrow \{1, 2, \ldots, n\} \cup \{\infty\}$ to be equal to $\kappa(\mathcal G)+1$, except in the following cases: 

\begin{itemize}[label=\raisebox{0.25ex}{\tiny$\bullet$}]
\item If $\mathcal G$ is not connected, then $\kappa^*(\mathcal G)=\infty$.
\item If $\mathcal G$ is a cycle, then $\kappa^*(\mathcal G)=n-1$.
\item If $\mathcal G$ is either bipartite or the $\theta_0$ graph (defined in \cref{fig:theta}), then $\kappa^*(\mathcal G)=2$.
\end{itemize}

\begin{namedtheorem}[Main Theorem]
\label{thm:main}
Let $\mathcal G$ be an arbitrary graph on $n$ vertices and let $\mathcal B_{n-k}^{(k)}$ be the $k$-book graph on $n$ vertices. The smallest $k$ such that $\operatorname{FS}(\mathcal G,\mathcal B_{n-k}^{(k)})$ is connected is exactly $\kappa^*(\mathcal G)$. 
\end{namedtheorem}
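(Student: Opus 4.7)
The plan is to separately establish the lower bound --- that $\operatorname{FS}(\mathcal G,\mathcal B_{n-k}^{(k)})$ is disconnected whenever $k<\kappa^*(\mathcal G)$ --- and the upper bound --- that it is connected whenever $k\geq \kappa^*(\mathcal G)$ --- each proceeding by the case split appearing in the definition of $\kappa^*$.

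For the \emph{lower bound}, the exceptional cases are handled by classical invariants: if $\mathcal G$ is disconnected, the multiset of asocial labels within each component is preserved by every legal swap; if $\mathcal G$ is a cycle, bipartite, or equal to $\theta_0$, the appropriate Wilson-style sign or rotational invariant carries over to our setting at the relevant small value of $k$. The essential new case is that of a hallway $H=(X_1,\ldots,X_\kappa)$ of order $\kappa$. The key structural observation is that every interior edge of $H$ is a bridge (since $H$ lies on no cycle) and $H$ has no side branches, so asocial people inside $H$ can only slide along $H$, and two asocial people can never cross one another within $H$ without both first exiting through a cut-vertex endpoint. We plan to package this into a combinatorial invariant --- heuristically, a cyclic ordering of the asocial labels along a canonical closed walk that uses $H$ together with a fixed return route through the rest of $\mathcal G$ --- and then check that every legal swap preserves this invariant whenever $k\leq \kappa$. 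Exhibiting two configurations with distinct invariants will then establish disconnectedness.

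For the \emph{upper bound}, the plan is to induct on some complexity measure of $\mathcal G$, for example the total number of vertices in hallways of order $\geq 2$, or the number of blocks in the block-cut tree. As a base case we take $\mathcal G$ biconnected with no hallway of order $\geq 2$; then the claim reduces to Wilson's theorem, supplemented by one extra butterfly in the bipartite and $\theta_0$ exceptions (matching $\kappa^*=2$). For the inductive step we pick a maximum-order hallway $H$ of order $\kappa$ and cut $\mathcal G$ at its endpoints into strictly simpler subgraphs $\mathcal G_1,\mathcal G_2$. Since $k\geq \kappa+1$, we can park $\kappa-1$ butterflies along $H$ as stepping stones while keeping at least two butterflies free; this reserve lets us realise arbitrary transpositions of adjacent asocial people. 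Transpositions internal to a block fall to Wilson's theorem, while transpositions across $H$ are implemented by shepherding one asocial across $H$ via the parked butterflies and then routing the other asocial back in the opposite direction.

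The main obstacle we anticipate is the hallway case on both sides. On the lower bound, the invariant must be preserved by global moves --- swaps happening entirely outside $H$ can later re-route butterflies back through $H$, and the cyclic statistic must be made insensitive to this. On the upper bound, the inductive bookkeeping is delicate: after cutting $\mathcal G$ along $H$, one must argue that each of $\mathcal G_1,\mathcal G_2$ still satisfies its own $\kappa^*$-condition with the butterflies allocated to it. The fact that $H$ is chosen of \emph{maximum} order plays a crucial role here, since it bounds the hallway complexity of each side of the cut.
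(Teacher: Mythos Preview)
Your lower-bound sketch for the hallway case rests on a misconception. You propose a cyclic invariant built from ``a canonical closed walk that uses $H$ together with a fixed return route through the rest of $\mathcal G$'', but by definition a hallway lies on no cycle: every edge of $H$ is a bridge, so there \emph{is} no return route from $X_\kappa$ to $X_1$ avoiding $H$. The actual obstruction is simpler and linear rather than cyclic (it is the content of \cref{DF-hallways}, which the paper imports from \cite{F&S}), and in any case this is not the hard direction.

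The substantive gap is in the upper bound. Your induction cuts $\mathcal G$ along a maximal hallway into $\mathcal G_1,\mathcal G_2$ and then either invokes the inductive hypothesis on each piece or realises transpositions block-by-block with two spare butterflies and Wilson's theorem. Both routes fail when a piece is a \emph{cycle}: for a cycle $\mathcal C$ one has $\kappa^*(\mathcal C)=|\mathcal C|-1$, so the inductive hypothesis gives nothing, and two butterflies cannot permute a long cycle. The paper meets this obstacle head-on. It inducts on the block-cut tree, peeling off a \emph{leaf block} $\mathcal L$ chosen so that the remainder $\mathcal A$ is not a cycle; it never splits the butterfly budget, instead sending all $k$ butterflies into $\mathcal A$ (reconfigured by induction) and then using only two of them to handle $\mathcal L$ via the \nameref{balloon} (which covers the case that $\mathcal L$ itself is a cycle through the \nameref{stopwatch}). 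The one graph for which no such choice of leaf exists --- two cycles sharing a vertex --- is treated as a separate base case by the \nameref{hourglass}. Your plan has no analogue of these lemmas and no mechanism to prevent a cycle from appearing on one side of the cut, so the induction as described does not close.
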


In other words, our \nameref{thm:main} states that the only obstructions to a connected configuration space are cycles, bipartite graphs, the theta graph $\theta_0$, and dark hallways.

Our \nameref{thm:main} can actually also be used to derive corollaries about the connected components of $\operatorname{FS}(\mathcal G,\mathcal B_{n-k}^{(k)})$ for general $\mathcal G$ and $k$ (i.e.\ when $\operatorname{FS}(\mathcal G,\mathcal B_{n-k}^{(k)})$ may not be connected). In particular, we describe an efficient algorithm for determining whether two configurations can be reached from each other, and we provide a general formula for the number of connected components of $\operatorname{FS}(\mathcal G,\mathcal B_{n-k}^{(k)})$. See \cref{connected components,sec:structure}.

\section{Overview of the Proof}
\label{overview}

Before giving an informal overview of the proof of the \nameref{thm:main}, we begin by recalling the situation for sliding block puzzles with exactly one empty vertex. Wilson's theorem~\cite{Wilson} (together with some observations due to Defant and Kravitz~\cite{F&S}) gives a description of the component structure of $\operatorname{FS}(\mathcal G, \mathcal K_{1,n-1})$, as follows.
Recall that a graph is \emph{biconnected} if it has no cut-vertex.

\begin{namedtheorem}[Wilson's Theorem]
\label{thm:Wilson}
Let $\mathcal G$ be a graph on $n$ vertices. We have the following cases.
\begin{enumerate}
\setcounter{enumi}{-1}
    \item \textit{$n\le 2$}. In this case, $\operatorname{FS}(\mathcal G, \mathcal K_{1,n-1})$ is trivially connected.
    \item \textit{$\mathcal G$ is biconnected, not bipartite, not a cycle, and not $\theta_0$}. In this case $\operatorname{FS}(\mathcal G, \mathcal K_{1,n-1})$ is connected.
    \item \textit{$\mathcal G$ is biconnected and bipartite, has at least 3 vertices, and is not a cycle}. In this case, $\operatorname{FS}(\mathcal G, \mathcal K_{1,n-1})$ has exactly two connected components of equal size. 
    \item \textit{$\mathcal G$ is not biconnected}. In this case, $\operatorname{FS}(\mathcal G, \mathcal K_{1,n-1})$ is disconnected.
    \item \textit{$\mathcal G$ is a cycle.} In this case, $\operatorname{FS}(\mathcal G, \mathcal K_{1,n-1})$ has as many connected components as cyclic orders of $\{1,\dots,n\}$ (i.e.\ $n!/2$ components of size $n(n-1)$).
    \item \textit{$\mathcal G$ is the $\theta_0$ graph.} $\operatorname{FS}(\theta_0, \mathcal K_{1,n-1})$ is not connected, and one can obtain a full description of the $6$ connected components and their structure (see \cite{Wilson} for the details).
\end{enumerate}

Moreover, in the setting of (2), we have the following characterisation of the components of $\operatorname{FS}(\mathcal G, \mathcal K_{1,n-1})$. ~Under some chosen identification of both $\mathtt V(\mathcal G)$ and $\mathtt V(\mathcal K_{1,n-1})$ with $\{1,2,\ldots, n\}$, configurations can be seen as permutations of $\{1,2,\ldots, n\}$. Denoting by $S\in\mathtt V(\mathcal G)$ the vertex supporting the unique social butterfly in the identity permutation/configuration, the two components of $\operatorname{FS}(\mathcal G, \mathcal K_{1,n-1})$ correspond exactly to the permutations whose parity are the same (resp. the opposite) as the parity of the distance in $\mathcal G$ between the vertex supporting their social butterfly and $S$.
\end{namedtheorem}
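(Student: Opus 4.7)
The plan is to split the proof into two halves: the obstruction direction (cases (2)--(5), which show disconnectedness), and the positive connectivity direction (case (1) and connectedness within each class of case (2)). Most of the obstructions reduce to exhibiting an invariant. In case (3), a cut-vertex $v$ partitions $\mathcal G-v$ into components, and the multiset of labels in each component is invariant up to the few labels passing through $v$; the lower bound on components follows as in \cite[Prop.~2.6]{F&S}. In case (4), the cyclic order of the $n-1$ labels on the cycle is preserved by every slide. In case (2), every swap is a transposition (flipping the permutation parity) and simultaneously moves the empty vertex across a bipartition edge (flipping the position parity), so the sum of these two parities is a $\mathbb{Z}/2$-invariant that separates configurations into two equal-size classes. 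Case (5) can be handled by a direct case analysis on the small graph $\theta_0$.

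The heart of the theorem is case (1). Fixing an empty vertex position $E\in\mathtt V(\mathcal G)$, I would consider the group $H\le S_{n-1}$ of label permutations realizable by slide sequences that return the empty vertex to $E$. Combined with the obvious transitivity of the empty vertex on $\mathtt V(\mathcal G)$, it suffices to prove $H=A_{n-1}$ in the bipartite case and $H=S_{n-1}$ in the non-bipartite case. The first step is to realize a single 3-cycle in $H$. Using biconnectedness together with the assumption that $\mathcal G$ is not a cycle, I would locate a cycle $C$ in $\mathcal G$ together with an ``ear'' (a path attached at two vertices of $C$ and otherwise disjoint from $C$); the exclusion of $\theta_0$ is precisely what ensures that this theta-like substructure admits enough free room to shuffle labels without collisions. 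An explicit sequence of slides around the theta substructure then performs a 3-cycle on three labels of $C$ while fixing all others.

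Once one 3-cycle is in hand, I would generate all of $A_{n-1}$ by conjugation: given any three labels $a,b,c$, use biconnectedness to slide them into the three distinguished positions of $C$ one by one, apply the basic 3-cycle, and reverse the preparatory slides; the resulting element of $H$ is the 3-cycle $(a\,b\,c)$. Since $A_{n-1}$ is generated by 3-cycles, this yields $H\supseteq A_{n-1}$. In the non-bipartite case, I would exploit an odd cycle of $\mathcal G$: cyclically shifting all its labels by one step with the help of the empty vertex realizes a single cycle of odd length, which is an odd permutation, and hence $H=S_{n-1}$. The final parity characterization in case (2) is then automatic, since both components are now known to be connected and the $\mathbb{Z}/2$-invariant from the obstruction discussion distinguishes them. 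The main obstacle I anticipate is the careful combinatorial bookkeeping required to write down the 3-cycle gadget inside the theta substructure and to verify that $\theta_0$ really is the unique exception, rather than merely one accidental failure of a general construction.
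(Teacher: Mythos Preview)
The paper does not actually prove Wilson's Theorem: it is stated in Section~2 as background, with attribution to Wilson~\cite{Wilson} and to Defant--Kravitz~\cite{F&S} for the refinements, and is then used as a black box in the proof of the Main Theorem and in the \nameref{stringless}. There is therefore no ``paper's own proof'' to compare your proposal against.

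That said, your sketch is broadly the classical route to Wilson's result (invariants for the obstruction side; build a $3$-cycle inside a theta subgraph, conjugate to get $A_{n-1}$, then use an odd cycle to reach $S_{n-1}$). One slip worth flagging: sliding the hole once around an odd cycle of length $m$ in $\mathcal G$ induces an $(m-1)$-cycle on the labels, which is a permutation cycle of \emph{even} length, not odd; it is nonetheless an odd permutation, so your conclusion stands. The genuinely delicate part, as you anticipate, is the explicit $3$-cycle gadget and the verification that $\theta_0$ is the sole biconnected non-cycle exception; Wilson's original paper handles this with a careful case analysis that your outline does not yet supply.
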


To be precise about attribution, Wilson's 1974 paper~\cite{Wilson} proves (1), (2) and (5), together with the characterisation for (2) described at the end of the theorem statement (these are all part of \cite[Theorem~1]{F&S}). Defant and Kravitz~\cite{F&S} observe (4) (see the discussion preceding \cite[Theorem 2.5]{F&S}), and in the setting of (3) they prove a lower bound for the number of connected components in terms of the number of cut-vertices (see \cite[Proposition~2.6]{F&S}). For the exact number of connected components in the setting of (3), we refer the reader to our more general \cref{formula}, which relies on definitions not yet introduced.

 \begin{figure}[H]\centering
  \includegraphics[page=29]{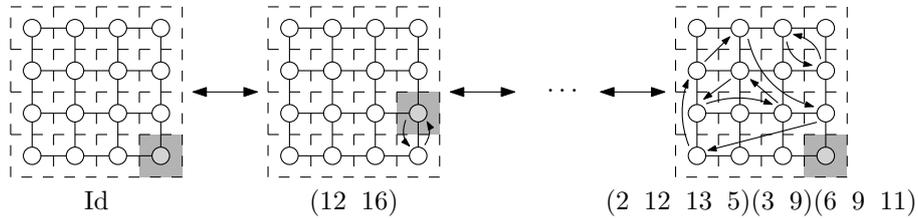}
  \caption{: \nameref{thm:Wilson} gives a simple criterion to check if two configurations are connected when only $1$ empty vertex is involved.
  }
  \label{fig:wilson}
\end{figure}
\vspace{-1em}

\begin{example}
The first two configurations of the $15$-puzzle shown on \cref{fig:15} are trivially connected as they differ by a single flip. Alternatively, this can be checked using \nameref{thm:Wilson}: if we work under the identification where the first configuration corresponds to the identity permutation (where $16$ is mapped to the empty vertex), the second configuration then corresponds to the transposition $(12 \;\; 16)$, see \cref{fig:wilson}. This permutation is odd, but so is the distance between the empty spaces of the first and second configurations and they are thus in the same connected component. Likewise, the last configuration shown in \cref{fig:15} is $(2\;\;12\;\;13\;\;5)(3\;\;9)(6\;\;9\;\;11)$, which is even. Since the empty spaces of the first and last configurations are in the same position (and $0$ is even), these configurations are in fact also connected.
\end{example}

\begin{example}
Because the underlying graph of the coiled-$15$ (see \cref{fig:coiled-15}) is no longer bipartite, case $(1)$ of \nameref{thm:Wilson} applies and all configurations become connected.
\end{example}

 \begin{figure}[H]\centering
  \includegraphics[page=17]{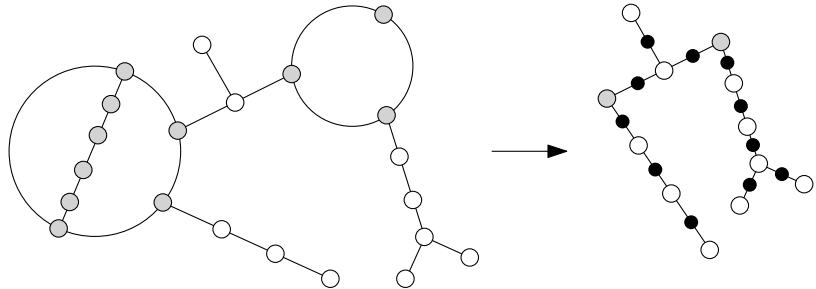}
  \caption{: A graph (left) and its associated block graph (right), with its cut-vertices in black, 2 libraries in grey and all other blocks in white.}
  \label{fig:blocks}
\end{figure}
\vspace{-1em}

In order to understand the situation when more social butterflies are involved, we first recall that a \emph{block} in a graph is a maximal biconnected subgraph, and we recall that every connected graph $\mathcal G$ admits a canonical tree-like decomposition into its blocks (see for example \cite[Section~3.1]{Diestel}). More formally, let us denote the set of cut-vertices of $\mathcal G$ by $\mathcal A$ and the set of blocks of $\mathcal G$ by $\mathfrak B$. The \textit{block graph} is defined to be the graph with vertex set $\mathcal A\cup \mathfrak B$ and edge set $\{A \mathcal B \> | \mathcal B\in \mathfrak B, A\in \mathcal B\}$. The block graph of a connected graph $\mathcal G$ is a tree. We will further distinguish between blocks consisting of a single edge and biconnected blocks on at least 3 vertices, the latter of which we shall refer to as \emph{libraries} (see \cref{fig:blocks}). 

In order to prove our \nameref{thm:main}, we provide a variation of \nameref{thm:Wilson} which allows us to fully understand the situation on each library (see the \nameref{balloon} in the next section). We then show how one can use the block decomposition of a graph to inductively bootstrap this strengthening and obtain an analysis of the whole graph from the individual analysis of each of its libraries. The intuition for this latter step stems from the observation that hallways play a crucial role in ferrying people across the block graph and in between libraries. Namely: if a hallway is longer than the number of social butterflies available (i.e.\ if the hallway is \emph{dark}), we cannot ferry any asocial people across it, as follows.

\begin{lemma}
\label{DF-hallways}
Fix any $k\ge 1$. If $\mathcal G$ is a graph with a dark hallway, then $\operatorname{FS}(\mathcal G,\mathcal B_{n-k}^{(k)} )$ is disconnected.
\end{lemma}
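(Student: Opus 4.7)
The plan is to exhibit an invariant of the dynamics of $\operatorname{FS}(\mathcal{G}, \mathcal{B}_{n-k}^{(k)})$ that takes different values on two specific configurations, thereby certifying disconnection. Let $\mathcal{H} = (X_1, \ldots, X_\ell)$ be the dark hallway with $\ell \ge k$. Since $\mathcal{H}$ is a bare path lying on no cycle, every one of its edges is a bridge of $\mathcal{G}$; deleting the interior vertices $X_2, \ldots, X_{\ell-1}$ therefore splits $\mathcal{G}$ into two connected pieces $L \ni X_1$ and $R \ni X_\ell$, and any asocial travelling from $L$ to $R$ must walk the whole hallway one vertex at a time.

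The invariant I would build is an ``$L$-multiset'' of asocial people, constructed as follows. Because $|\mathcal{H}| = \ell \ge k$, all $k$ social butterflies can fit inside $\mathcal{H}$, and starting from any configuration $\sigma$ one can shuttle each social along a path into $\mathcal{H}$ by repeated swaps; hence every connected component of $\operatorname{FS}(\mathcal{G}, \mathcal{B}_{n-k}^{(k)})$ contains at least one such ``socials-inside'' configuration. In any socials-inside configuration, the $n-k$ asocial people are partitioned among $L \setminus \mathcal{H}$, the $\ell-k$ remaining positions of $\mathcal{H}$, and $R \setminus \mathcal{H}$. I would define $M_L(\sigma)$ to be the multiset of asocial people occupying $L \setminus \mathcal{H}$ in any socials-inside configuration reachable from $\sigma$, and then argue that $M_L(\sigma)$ is independent of the chosen representative and hence a genuine function of the component of $\sigma$.

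The heart of the argument is a conservation law: to alter $M_L$ one would need to shuttle an asocial from $L \setminus \mathcal{H}$ across $\mathcal{H}$ to $R \setminus \mathcal{H}$ while simultaneously shuttling a different asocial in the opposite direction. Because every hallway crossing by an asocial consumes a social butterfly at the opposite endpoint of the crossing edge, and because there are only $k$ social butterflies available, simultaneous two-directional traffic through the $\ell \ge k$-long hallway cannot be sustained once all socials are asked to re-occupy $\mathcal{H}$ at the end. A clean way to formalise this is via a potential function: assign to each asocial a signed count of its net crossings of a distinguished interior hallway edge, and verify that when the $k$ socials are reassembled inside $\mathcal{H}$ these counts rigidly pin down $M_L$ to its value in the earlier socials-inside configuration. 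Once the invariance of $M_L$ is established, disconnection follows immediately by displaying two configurations differing only by a transposition of an $L$-asocial and an $R$-asocial.

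The main obstacle is handling the tight case $\ell = k$. When $\ell > k$ the hallway always contains at least one asocial, which acts as an unambiguous ``blocker'' making the conservation law rather direct. When $\ell = k$, however, the hallway can momentarily be filled entirely with social butterflies, and one has to show that this empty window cannot be exploited to secretly exchange asocials across sides. I would handle this through a careful trajectory analysis tracking the order in which social butterflies enter and leave $\mathcal{H}$ during any such window, or alternatively by a structural reduction using the block-cut tree of $\mathcal{G}$ in the spirit of Wilson's theorem applied blockwise.
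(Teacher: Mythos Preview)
The paper does not supply a detailed argument for this lemma: it simply remarks that the statement is ``easy to see directly'' and that it alternatively follows from Theorem~6.1 of Defant--Kravitz. So there is little to compare against, but your plan is considerably more elaborate than what the paper has in mind, and as written it has a genuine gap: you never establish that your invariant $M_L$ is well-defined. The ``conservation law'' paragraph is a heuristic, the potential function is promised but never written down, and the proposed fallback (a block-cut-tree reduction ``in the spirit of Wilson's theorem'') is not a viable route here, since Wilson's theorem is a connectedness result for $k=1$ and provides no mechanism for certifying disconnectedness at general $k$.

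The difficulty you flag at $\ell=k$ is in fact an artefact of having chosen too strong an invariant. To prove disconnectedness you only need to show that a \emph{single} fixed asocial $a$ cannot cross the hallway, and this is what makes the lemma ``easy''. Assign index $0$ to all vertices strictly left of $X_1$, index $i$ to $X_i$ for $1\le i\le k$, and index $k+1$ to everything strictly right of $X_k$ (including $X_{k+1},\dots,X_\ell$ when $\ell>k$); then set
\[
\Phi_a(\sigma)\;=\;\operatorname{idx}_\sigma(a)\;-\;\#\bigl\{\text{social butterflies }s:\operatorname{idx}_\sigma(s)<\operatorname{idx}_\sigma(a)\bigr\}.
\]
While $a$ sits in the hallway every swap preserves $\Phi_a$ (the only edges crossing the relevant index threshold are incident to $a$'s own vertex); whenever $a$ is strictly left one has $\Phi_a=0$; and when $a$ enters $X_1$ from the left the swap deposits a social into $L$, so $\Phi_a\le 0$ again. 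Finally, moving $a$ from $X_k$ rightwards requires a social of index $k+1$, which forces $\Phi_a\ge k-(k-1)=1$ at that moment. Hence ``$\Phi_a\le 0$'' is preserved by all swaps, and two configurations placing $a$ on opposite sides of the hallway lie in different components. This works uniformly for every $\ell\ge k$, with no separate treatment of the case $\ell=k$.
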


\cref{DF-hallways} is easy to see directly, but can also be seen as a corollary of \cite[Theorem 6.1]{F&S}, which more generally states that $\operatorname{FS}(\mathcal G,\mathcal F )$ is disconnected whenever $\mathcal G$ has a dark hallway and $\mathcal F$ has minimum degree at most $k$.

To establish our results, we shall heavily rely on the connectedness of $\operatorname{FS}(\mathcal G, \mathcal B_{n-k}^{(k)})$ for a few special types of graphs $\mathcal G$ which we analyse in \cref{sec:special-graphs}, in a series of lemmas of increasing generality.

\section{the snake tongue, the stopwatch, the hourglass and the balloon}\label{sec:special-graphs}
\label{lemmas}

In this section, we establish connectivity of $\operatorname{FS}(\mathcal G, \mathcal B_{n-k}^{(k)})$ for some special graphs $\mathcal G$. First we prove the \nameref{snaketongue}, then a series of lemmas culminating in the \nameref{balloon}, taking a short detour for the \nameref{hourglass}.

To give a quick summary of how everything comes together: the proof of our \nameref{thm:main} will be by induction; the \nameref{snaketongue} and the \nameref{balloon} are used in the inductive step (in particular, the \nameref{snaketongue} will be used to ferry people across hallways), while the base cases of the induction are provided by the \nameref{stringless} (proved on the way to the \nameref{balloon}) and the \nameref{hourglass}.

We point out to the reader that, in all the following lemmas, we implicitly make use of \cref{remark-k} in our proofs, which states that we need only provide each proof for the smallest relevant $k$ (usually $k=2$). 

Our first special graph is the \emph{snake tongue} graph $\mathcal D_n$, $n\geq 4$, defined as the Dynkin diagram of type $\mathcal D_n$, i.e.\ the graph on $n$ vertices obtained from a path on $n-1$ vertices by adding a leaf to the second-last vertex of the path (see \cref{fig:snake}). 

\begin{remark*}
From now on, in all our figures, we shall indicate vertices occupied by social butterflies in grey.
\end{remark*}

 \begin{figure}[H]\centering
  \includegraphics[page=19]{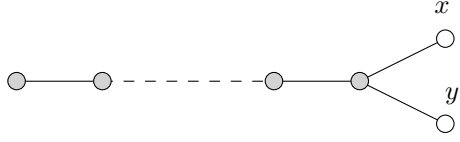}
  \caption{: The snake tongue graph $\mathcal D_n$.
  }
  \label{fig:snake}
\end{figure}
\vspace{-1em}

\begin{namedlemma}[Snake Tongue Lemma]
\label{snaketongue}
For all $n\in\mathbb N$, $n-2\leq k 
\leq n$, $\operatorname{FS}(\mathcal D_n,\mathcal B_{n-k}^{(k)})$ is connected.
\end{namedlemma}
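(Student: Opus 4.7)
The plan is to reduce to $k=n-2$ via the monotonicity remark (Remark~\ref{remark-k}), which leaves two asocial people $a, b$ and $n-2$ social butterflies to be arranged on $\mathcal{D}_n$. Fix a canonical target configuration $\sigma^\star$ with $a$ on the fork-leaf $X_{n-1}$, $b$ on the other fork-leaf $Y$, and the butterflies $s_1,\ldots,s_{n-2}$ occupying $X_1,\ldots,X_{n-2}$ in some prescribed order. Since all swaps are involutions, it suffices to show that every configuration can be transformed into $\sigma^\star$.

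The first stage routes $a$ and $b$ to the fork-leaves. The key observation is that any asocial person can be walked along any tree-path whose intermediate vertices currently host butterflies: each successive swap with the butterfly ahead is legal, since butterflies are friends with everybody. The only obstruction is the other asocial person blocking the desired route, and because $\mathcal{D}_n$ is a tree with exactly one branching vertex $X_{n-2}$, one can always park the blocking asocial person temporarily on the free fork-leaf or on an empty stretch of the main path $X_1,\ldots,X_{n-2}$ and then walk the other asocial person into position. A short case analysis on the joint positions of $a$ and $b$ handles this, except for the delicate sub-case where they both land on the fork-leaves but in swapped order (i.e., $a$ at $Y$ and $b$ at $X_{n-1}$). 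For that sub-case I would use a ``fork-swap'' gadget: with butterflies $s, s'$ at $X_{n-2}, X_{n-3}$, a sequence of six swaps, each across one of the edges incident to $X_{n-2}$, exchanges $a \leftrightarrow b$ at the fork-leaves at the cost of merely transposing $s$ and $s'$.

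Once $a$ and $b$ are pinned at the fork-leaves, the butterflies fill $X_1,\ldots,X_{n-2}$ in some order, and every adjacent butterfly-butterfly swap along this straight-line path is legal. Since adjacent transpositions generate $S_{n-2}$, the butterflies can be shuffled into the prescribed canonical order without disturbing $a, b$. Composing the two stages produces the desired transformation $\sigma \to \sigma^\star$.

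The main technical hurdle is the bookkeeping in Stage~1: one has to verify that in every joint configuration of $a, b$ the blocking asocial person can indeed be parked without ever triggering an illegal $a$-$b$ swap. The fork-swap gadget is the essential ingredient for the otherwise impossible swapped-leaves case; the remaining cases follow from the tree structure of $\mathcal{D}_n$ together with the fact that, with only two asocial people and at least two butterflies ($n \ge 4$), there is always enough free space on the main path to park either asocial person out of the way.
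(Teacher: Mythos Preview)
Your proof is correct and follows essentially the same idea as the paper: the key maneuver is exactly what the paper calls ``moving to the tip of the snake tongue, then moving back in the opposite order''---your fork-swap gadget is a spelled-out version of this. The only difference is that the paper's proof is a two-line affair (handling $k\ge n-1$ directly as the clique case, and for $k=n-2$ citing \cite[Theorem~6.5]{F&S} after sketching the tip-swap), whereas you give a self-contained argument via a canonical target configuration and a two-stage reduction. Your version is longer but has the advantage of not depending on an external reference; the Stage~1 case analysis you describe as ``short'' is indeed routine, and your six-swap gadget is easily checked.
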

In words, the \nameref{snaketongue} says that $\mathcal D_n$ can be reconfigured with $n-2$ social butterflies.

\begin{proof}
This is trivial if $k\ge n-1$, because then $B_{n-k}^{(k)}$ is a clique and everyone can move around freely. If $k=n-2$ there are only two asocial people, who can exchange their positions by moving to the ``tip of the snake tongue'', then moving back in the opposite order. (Formally, this is a special case of \cite[Theorem~6.5]{F&S}, since $\mathcal B_{n-k}^{(k)}$ has minimum degree at least $n-2$).
\end{proof}

We next define the \emph{stopwatch} graph $\mathcal S\mathcal W_n$, $n\geq 2$, as a cycle $\mathcal C_{n-1}$ of length $n-1$ (the \emph{dial} of the stopwatch) with an added leaf (the \emph{crown} of the stopwatch), see \cref{fig:clockwork}. We denote by $O$ and $P$ the unique vertices of degree 1 and 3, respectively, and write $V_1,\dots,V_{n-2}$ for the other vertices (ordered around the dial, so that $V_1$ and $V_{n-2}$ are neighbours of $P$). 

\begin{namedlemma}[Stopwatch Lemma]
\label{stopwatch}
For all $n\in\mathbb N$, $2\leq k 
\leq n$, $\operatorname{FS}(\mathcal S\mathcal W_n,\mathcal B_{n-k}^{(k)})$ is connected.
\end{namedlemma}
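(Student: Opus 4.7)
The plan is to treat only the case $k=2$, which suffices by Remark \ref{remark-k}. So we fix two distinguishable social butterflies $e_1,e_2$ and $n-2$ distinguishable asocial people on $\mathcal{SW}_n$, and show that any two configurations are connected by friendly swaps. The strategy rests on two ``basic operations'' available whenever both butterflies sit on $\{O,P\}$, together with the observation that the butterflies can be walked anywhere on $\mathcal{SW}_n$.

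The first basic operation is a \emph{rotation}: parking one butterfly at $O$ and sending the other once around the full dial $\mathcal{C}_{n-1}$ returns it to its starting vertex while cyclically shifting the $n-2$ asocial people by a single position around $V_1,V_2,\ldots,V_{n-2}$. This produces a full $(n-2)$-cycle acting on the asocial people.

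The second basic operation is a \emph{swap} that transposes the asocial people at $V_1$ and $V_{n-2}$ while leaving every other vertex unchanged. Starting from $O\colon e_1$, $P\colon e_2$, $V_1\colon a$, $V_{n-2}\colon b$, a short sequence of swaps along the edges inside the ``Y'' $\{O,P,V_1,V_{n-2}\}$ parks $a$ at $O$ via $P$, slides $b$ across $P$ into $V_1$, and pulls $a$ back from $O$ via $P$ into $V_{n-2}$; a final swap of the two butterflies across $\{O,P\}$, if necessary, restores them to their original vertices. The net effect is $V_1\leftrightarrow V_{n-2}$ on the asocial people, with everything else fixed.

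To conclude, I would invoke the classical fact that an $m$-cycle together with a transposition of two elements that are adjacent in its cyclic order generates the whole symmetric group $S_m$. In our setting, the $(n-2)$-cycle produced by the rotation makes $V_1$ and $V_{n-2}$ cyclically adjacent (they are separated by a single pass through $P$), so composing rotations with the $(V_1\,V_{n-2})$-swap realises any permutation of the asocial people on $\{V_1,\ldots,V_{n-2}\}$ while keeping the butterflies at $\{O,P\}$. To connect two arbitrary configurations $\sigma$ and $\sigma'$, it then suffices to walk the butterflies to $\{O,P\}$ from each of $\sigma$ and $\sigma'$ (always possible since butterflies are friends with everyone) and match the resulting tile arrangements using the generated $S_{n-2}$ action. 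The main technical obstacle is verifying the explicit swap sequence in the second step: a short but finicky bookkeeping check that no vertex outside $\{O,P,V_1,V_{n-2}\}$ is disturbed and that both butterflies end up back at $O$ and $P$.
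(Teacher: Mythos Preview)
Your argument is correct and follows the same overall architecture as the paper: reduce to $k=2$, normalise so that the butterflies occupy $O$ and $P$, and then combine a cyclic shift of the dial with a local manoeuvre at the crown to generate all permutations of the asocial people. The difference is only in the choice of that second generator. The paper's ``tick'' shifts \emph{all} $n-1$ dial occupants (including the dial butterfly) by one position, and then uses the crown as temporary storage to remove any person from the cyclic order and reinsert them elsewhere, finishing by insertion sort. You instead keep the dial butterfly anchored at $P$ (so your rotation is an $(n-2)$-cycle on the asocial people only) and exhibit an explicit transposition $(V_1\;V_{n-2})$ realised inside the ``Y'' $\{O,P,V_1,V_{n-2}\}$, invoking the standard fact that a full cycle together with a transposition of cyclically adjacent elements generates $S_{n-2}$. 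Your swap sequence checks out line by line (seven swaps along $OP$, $PV_1$, $PV_{n-2}$, touching nothing else; the final $e_1\leftrightarrow e_2$ swap is legal since the butterflies are mutual friends in $\mathcal B_{n-2}^{(2)}$). Either route is perfectly adequate here; the paper's version is slightly more self-contained (no appeal to a generation fact), while yours is a bit more explicit about the group-theoretic structure.
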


\begin{proof}
It is enough to show that all configurations with a social butterfly on $O$ can be reached from each other, seeing as we can always bring a social butterfly to $O$. 

Observe first that there is a sequence of swaps $\pi$ which fixes the social butterfly on $O$ and cyclically permutes the people on the dial vertices $V_1,V_2,\dots,V_{n-2}$ (including the social butterfly lying on $P$) by one position. This permutation $\pi$ can perhaps be seen more intuitively in the language of sliding blocks: the empty space allows us to slide all the non-empty blocks by one position along the dial (counter-clockwise for example). Note that in this scenario, the empty vertex itself is seen as moving clockwise $n-2$ positions around the dial. This motion can be repeated as many times as needed to obtain any power of $\pi$; we call this \emph{ticking} the stopwatch.

\begin{figure}[H]\centering
  \includegraphics[page=10]{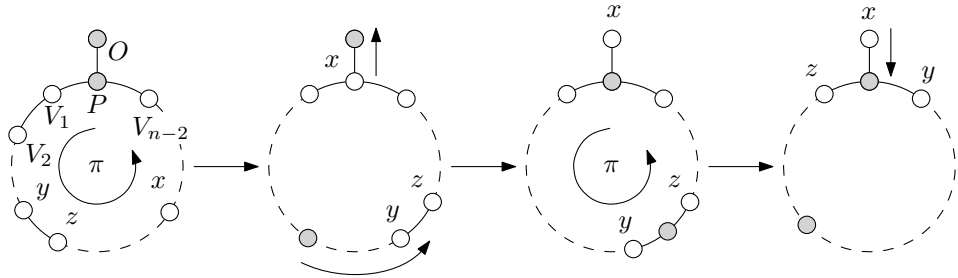}
  \caption{: The \nameref{stopwatch}: any person $x$ in the dial of a stopwatch can be removed and inserted back in the cycle between any other two persons $y$ and $z$ lying on successive vertices of the dial. To do so requires ``ticking'' the stopwatch the appropriate number of times, storing $x$ in the crown of the stopwatch, then ticking it some more before inserting the stored person $x$ back between $y$ and $z$.}
  \label{fig:clockwork}
  \end{figure}

For all other reconfigurations, the key observation is that we can store any person $x$ in the crown: after ticking the stopwatch to bring $x$ to $P$, we may swap it with the social butterfly at $O$. While doing this, we can position the other social butterfly left on the dial between any two people $y$ and $z$ lying on two chosen consecutive vertices of the dial. We then tick the stopwatch until this last social butterfly now lies on $P$, before swapping it with $x$ to insert it back between $z$ and $y$. Since there is once again a social butterfly on $O$, we can repeat this entire operation as many times as required, and all permutations can be reached by a sequence of such insertions via the \emph{insertion sort} algorithm (see for example \cite[5.2.1]{knuth}). 
\end{proof}
\begin{cor}
\label{cor}
For all $k\in \{2,3,\ldots, 7\}$, $\operatorname{FS}(\theta_0,\mathcal B_{7-k}^{(k)})$ is connected.
\end{cor}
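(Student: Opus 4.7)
The plan is to reduce the corollary to the just-proved \nameref{stopwatch} via a spanning-subgraph comparison. By \cref{remark-k} (monotonicity in $k$), it suffices to treat the smallest case, $k=2$; the remaining values $k\in\{3,\dots,7\}$ then come for free.

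The key step will be to recognise $\theta_0$ as containing the stopwatch $\mathcal S\mathcal W_7$ as a spanning subgraph. Recall the standard description of $\theta_0$: two hub vertices $U,V$ joined by three internally disjoint paths of lengths $2$, $3$, and $3$, for $7$ vertices in total. The two length-$3$ paths together form a $6$-cycle $\mathcal C$ passing through every vertex except the interior vertex $W$ of the length-$2$ path, and $W$ is adjacent in $\theta_0$ to both $U$ and $V$. Deleting one of these two edges at $W$ --- say $\{W,V\}$ --- leaves $\mathcal C$ together with the single pendant edge $\{U,W\}$, which is exactly $\mathcal S\mathcal W_7$ (with $\mathcal C$ as its dial and $W$ as its crown).

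From here I would invoke the trivial but useful monotonicity of $\operatorname{FS}$ under edge addition: if $\mathcal G'\subseteq \mathcal G$ as spanning subgraphs, then every $(\mathcal G',\mathcal F)$-friendly swap is also $(\mathcal G,\mathcal F)$-friendly, so connectivity of $\operatorname{FS}(\mathcal G',\mathcal F)$ implies connectivity of $\operatorname{FS}(\mathcal G,\mathcal F)$. Applying this with $\mathcal G'=\mathcal S\mathcal W_7$, $\mathcal G=\theta_0$, and $\mathcal F=\mathcal B_{7-k}^{(k)}$, the \nameref{stopwatch} finishes the job. There is essentially no obstacle beyond spotting the spanning stopwatch; once that is noticed, the argument collapses to a single line and requires no further combinatorial work.
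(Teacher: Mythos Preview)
Your argument is correct and is essentially identical to the paper's: both observe that $\theta_0$ is a spanning stopwatch plus one extra edge, then invoke the \nameref{stopwatch} together with monotonicity under edge addition. (The paper's proof writes $\mathcal S\mathcal W_6$, but this is a typo for $\mathcal S\mathcal W_7$, exactly as in your description; a $6$-vertex graph cannot span the $7$-vertex $\theta_0$, and the phrase ``the unique vertex at distance $4$ from the crown'' only makes sense for $\mathcal S\mathcal W_7$.)
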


\begin{proof}[Proof of \cref{cor}]
Notice that the $\theta_0$ graph can be seen as a stopwatch graph $\mathcal S \mathcal W_6$ with a single additional edge drawn between its crown and the unique vertex at distance $4$ from the crown. This additional edge only makes reconfiguration easier.
\end{proof}

Next, we say that a graph $\mathcal H$ is an \emph{hourglass} if it can be obtained by joining two cycles at a single vertex, called its \emph{throat}.

\begin{figure}[H]\centering
  \includegraphics[page=27]{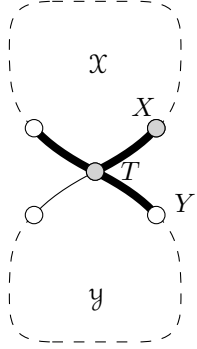}
  \caption{: An hourglass graph, consisting of two cycles joined at a single \emph{throat} vertex $T$. Its $\mathcal D_4$ snake tongue subgraph is highlighted in bold.}
  \label{fig:hourglass}
\end{figure}
\vspace{-1em}

\begin{namedlemma}[Hourglass Lemma]
\label{hourglass}
For all $n\in\mathbb N$, $2\leq k 
\leq n$, if $\mathcal H$ is an hourglass on $n$ vertices, then $\operatorname{FS}(\mathcal H,\mathcal B_{n-k}^{(k)})$ is connected.
\end{namedlemma}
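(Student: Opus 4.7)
By Remark~\ref{remark-k} it suffices to prove the case $k=2$. Label the two cycles of $\mathcal H$ as $T, A_1, \ldots, A_p, T$ and $T, B_1, \ldots, B_q, T$, so that $A_1, A_p$ (respectively $B_1, B_q$) are the two neighbours of $T$ in the first (respectively second) cycle. My plan is to cover $\mathcal H$ by two overlapping stopwatch subgraphs, apply the \nameref{stopwatch} inside each, and finish off with a standard fact about generating the symmetric group.

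First, define $\mathcal S\mathcal W^{1}$ to be the subgraph of $\mathcal H$ consisting of the first cycle together with the extra vertex $B_1$ and edge $T B_1$ (so that the first cycle plays the role of the ``dial'' and $B_1$ the ``crown''); define $\mathcal S\mathcal W^{2}$ symmetrically, using the second cycle and $A_1$. Each $\mathcal S\mathcal W^{i}$ is isomorphic to a stopwatch graph, and since $\mathcal S\mathcal W^{i} \subseteq \mathcal H$, every swap performed inside $\mathcal S\mathcal W^{i}$ is a legal swap in $\mathcal H$. Moreover $V(\mathcal S\mathcal W^{1}) \cup V(\mathcal S\mathcal W^{2}) = V(\mathcal H)$ and $V(\mathcal S\mathcal W^{1}) \cap V(\mathcal S\mathcal W^{2}) = \{T, A_1, B_1\}$.

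Given two configurations $\sigma, \sigma'$ of $\mathcal H$, I would first ferry the two butterflies in each of them to the canonical pair $(T, A_1)$; this is always possible in a connected graph, at the (permissible) price of permuting some asocial people along the way. It thus suffices to connect any two configurations whose butterflies sit at $\{T, A_1\}$. Since $\{T, A_1\} \subseteq V(\mathcal S\mathcal W^{1}) \cap V(\mathcal S\mathcal W^{2})$, the \nameref{stopwatch} can be invoked inside either subgraph without moving the butterflies off it. Applying it inside $\mathcal S\mathcal W^{1}$ and retaining only those move sequences that return the butterflies to $\{T, A_1\}$ realises the full symmetric group on $V_A := V(\mathcal S\mathcal W^{1}) \setminus \{T, A_1\} = \{A_2, \ldots, A_p, B_1\}$; symmetrically, $\mathcal S\mathcal W^{2}$ yields the full symmetric group on $V_B := \{B_1, \ldots, B_q\}$.

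To finish, I invoke the classical fact that whenever $V_A, V_B$ are finite sets with $V_A \cap V_B \neq \emptyset$ one has $\langle S_{V_A}, S_{V_B} \rangle = S_{V_A \cup V_B}$, witnessed by the identity $(a\,b) = (a\,t)(t\,b)(a\,t)$ for any $a \in V_A \setminus V_B$, $b \in V_B \setminus V_A$, and $t \in V_A \cap V_B$. In our setting $V_A \cap V_B = \{B_1\}$ and $V_A \cup V_B$ is exactly the set of all asocial positions when the butterflies are at $\{T, A_1\}$, so every permutation of the asocial people is realisable and $\operatorname{FS}(\mathcal H, \mathcal B_{n-2}^{(2)})$ is connected. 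The main obstacle I anticipate is simply the bookkeeping around butterfly positions: one has to verify that between consecutive applications of the \nameref{stopwatch} the butterflies are always back at $\{T, A_1\}$, which is precisely why they are placed in $V(\mathcal S\mathcal W^{1}) \cap V(\mathcal S\mathcal W^{2})$.
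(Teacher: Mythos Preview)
Your proof is correct and uses the same core idea as the paper: cover $\mathcal H$ by the two stopwatches obtained from each cycle together with one neighbour of the throat from the other cycle, and invoke the \nameref{stopwatch} in each. The difference lies only in how the two pieces are glued. The paper additionally singles out a $\mathcal D_4$ snake tongue at the throat and uses the \nameref{snaketongue} to explicitly swap a pair $x\in\mathcal X$, $y\in\mathcal Y$ across $T$, iterating to realise any desired partition before finishing with the \nameref{stopwatch} on each side. You instead observe that the two stopwatches already share the vertex $B_1$ among their ``asocial'' positions (with butterflies parked at $\{T,A_1\}$), and conclude via the elementary fact $\langle S_{V_A},S_{V_B}\rangle=S_{V_A\cup V_B}$ when $V_A\cap V_B\ne\emptyset$. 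This is a cleaner algebraic endgame that dispenses with the \nameref{snaketongue} altogether; the paper's version, on the other hand, is more procedural and foreshadows the exact ``ferry-across-a-cut-vertex'' mechanism reused in the inductive step of the \nameref{thm:main}.
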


\begin{proof}
    Let $\mathcal H$ be an hourglass graph on $n$ vertices and denote by $\mathcal X$ and $\mathcal Y$ its two cycles joined at its throat vertex $T$. Denote by $X$ (respectively $Y$) one of the two neighbours of $T$ in $\mathcal X$ (respectively $\mathcal Y$). We first note that there always lies a snake tongue $\mathcal D_4$ in the throat of the hourglass, with its unique degree 3 vertex coinciding with the throat vertex. Moreover, we may select it so that the (two-pointed) tip of the tongue lies in $\mathcal X$ and the rest of the tongue in $\mathcal Y$; see \cref{fig:hourglass}. By bringing two social butterflies to either $T$ and $X$ or $T$ and $Y$, we can operate the \nameref{stopwatch} in either of the stopwatches induced by $\mathcal X \cup Y$ or $\mathcal Y\cup X$. This allows us to bring any person $x$ in $\mathcal X$ (resp. $y$ in $\mathcal Y$) to a non-throat vertex of the snake tongue lying in $\mathcal X$ (resp. $\mathcal Y$). Using the \nameref{snaketongue} we may then exchange $x$ and $y$ across $T$ between $\mathcal X$ and $\mathcal Y$. By repeatedly applying this procedure, we can reach any desired partition of the people across $\mathcal X$ and $\mathcal Y$. An application of the \nameref{stopwatch} in both $\mathcal X$ and $\mathcal Y$ is then enough to reconfigure the people in each of the two cycles of the hourglass.
\end{proof}

We are now in a position to prove our last lemma, which establishes connectivity of $\operatorname{FS}(\mathcal G,\mathcal B_{n-k}^{(k)})$ for a much more general class of graphs called \emph{balloons}. A graph $\mathcal Q$ is a balloon if it can be obtained by adding a single leaf to a biconnected graph. Balloons can thus be seen as a generalisation of stopwatches. 

\begin{namedlemma}[Balloon Lemma]
\label{balloon}
Let $\mathcal Q$ be a balloon, obtained by adding a leaf $O$ to a vertex $P$ of a biconnected graph $\mathcal G$ on $(n-1)$ vertices. For all $k\geq 2$,  $\operatorname{FS}(\mathcal Q,\mathcal B_{n-k}^{(k)})$ is connected. 
\end{namedlemma}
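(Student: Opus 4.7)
The plan is to induct on the pair $(|V(\mathcal G)|, |E(\mathcal G)|)$ ordered lexicographically, after reducing to $k = 2$ by \cref{remark-k}. The base case is when $\mathcal G$ is itself a cycle, in which case $\mathcal Q$ is a stopwatch and the \nameref{stopwatch} applies directly. For the inductive step we use an ear decomposition of $\mathcal G$ together with the \nameref{stopwatch} applied to a cycle through $P$, in a manner analogous to the proof of the \nameref{hourglass}.

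Suppose then that $\mathcal G$ is biconnected but not a cycle. Choose an open ear decomposition $\mathcal G = \mathcal C_0 \cup \mathcal P_1 \cup \cdots \cup \mathcal P_m$ with $\mathcal C_0$ passing through $P$ and $m \geq 1$; let $\mathcal P_m$ be the last ear, with endpoints $A, B$ in $\mathcal G' := \mathcal C_0 \cup \cdots \cup \mathcal P_{m-1}$ and interior vertices $V_1, \ldots, V_\ell$. The inductive hypothesis applied to the smaller balloon $\mathcal G' + O$ (which is smaller in vertex count when $\ell \geq 1$, and in edge count when $\ell = 0$, using that adding edges to $\mathcal G$ only enlarges the set of allowed swaps) gives us free reconfigurability there. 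Since $\mathcal G'$ is biconnected, there is a path $\mathcal R \subset \mathcal G'$ from $A$ to $B$ passing through $P$; then $(\mathcal R \cup \mathcal P_m) \cup \{O\}$, together with the leaf edge $PO$, is a stopwatch subgraph of $\mathcal Q$, on whose vertices the \nameref{stopwatch} gives full reconfigurability. The strategy is then to alternate these two operations: use the ear-stopwatch to ferry people on and off the ear interior, and use the inductive hypothesis to rearrange the rest of $\mathcal G'$.

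To install a given target configuration, one would first use the ear-stopwatch to evacuate the ear interior, temporarily placing its occupants onto $\mathcal R \cup \{O\}$; then invoke the inductive hypothesis on $\mathcal G' + O$ to shuttle the desired ear-interior occupants onto $\mathcal R$ while simultaneously correcting the configuration on $V(\mathcal G') \setminus V(\mathcal R)$; and finally apply the \nameref{stopwatch} once more to the ear-stopwatch to install the correct configuration on its vertex set, without touching $V(\mathcal G') \setminus V(\mathcal R)$. The main obstacle is composing these steps cleanly: we must ensure that at each stage both butterflies are positioned on the subgraph to which the relevant lemma is being applied (in particular, that the inductive hypothesis is invoked with both butterflies lying in $\mathcal G' + O$), and that the intermediate configuration on $\mathcal R \cup \{O\}$ produced by the inductive hypothesis is indeed one from which the final ear-stopwatch move can reach the target. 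This bookkeeping mirrors the two-stopwatch interaction used in the proof of the \nameref{hourglass}, with the new ear playing the role of the second cycle and $\mathcal R$ playing the role of the throat region.
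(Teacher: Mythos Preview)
Your approach differs from the paper's in an interesting way. The paper does not induct on an ear decomposition at all: instead it invokes the \nameref{stringless} (which rests on \nameref{thm:Wilson}) once to rearrange all of $\mathcal G$ except for one cycle $\mathcal C$ through $P$, then finishes with a single application of the \nameref{stopwatch} on $\mathcal C\cup\{O\}$. Concretely: park one asocial person on $O$, so that two butterflies sit inside $\mathcal G$; apply the \nameref{stringless} to make everything off $\mathcal C$ agree with the target; bring that person back; then fix $\mathcal C$ via the \nameref{stopwatch}. Your ear-decomposition route is more elementary in that it never appeals to Wilson, building everything from the \nameref{stopwatch} alone; the price is a more delicate inductive bookkeeping.

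That bookkeeping is where your three-step plan has a genuine gap. In your step~1 you ``evacuate'' the ear interior by pushing its $\ell$ occupants onto $\mathcal R\cup\{O\}$, which necessarily strands some $\ell$ people $Z\subseteq S_\sigma$ (the current stopwatch occupants) on the ear interior. Those people are frozen throughout step~2, so step~2 can correctly fill $V(\mathcal G')\setminus V(\mathcal R)$ only if $Z\subseteq S_{\sigma'}$, i.e.\ only if $S_\sigma\cap S_{\sigma'}$ contains at least $\ell$ asocial people. This can fail badly: if $r$ is small and $\mathcal G'$ is large, one can choose $\sigma,\sigma'$ with $S_\sigma\cap S_{\sigma'}$ consisting of the two butterflies alone (e.g.\ take $\mathcal G'=\mathcal K_6$, a single-vertex ear, $\mathcal R=(A,P,B)$; then the stopwatch has five vertices and one can arrange $S_\sigma\setminus\{\text{butterflies}\}$ and $S_{\sigma'}\setminus\{\text{butterflies}\}$ to be disjoint). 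The obstacle you flag --- butterfly placement and the shape of the step~2 output on $\mathcal R\cup\{O\}$ --- is not the binding constraint; the binding constraint is who gets stranded on the ear in step~1.

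The fix is exactly what your earlier sentence about ``alternating'' suggests, done one person at a time in the spirit of the \nameref{hourglass}: to trade an asocial $x$ on the ear interior for an asocial $y$ in $\mathcal G'\cup\{O\}$, first use the inductive hypothesis on $\mathcal G'+O$ to bring $y$ to $A$ with butterflies on $O,P$; then use the \nameref{stopwatch} on the ear-stopwatch to swap $x$ and $y$ while fixing everyone else. Repeating this realises any desired set of people on the ear interior, after which one final \nameref{stopwatch} and one final inductive call finish the job. With this correction your argument goes through and yields a Wilson-free proof; the paper's route is shorter but leans on the \nameref{stringless}.
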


On our way to prove the \nameref{balloon}, we will also prove the following variation of it, which will be used in the base case of the induction of the proof of our \nameref{thm:main}. This following lemma confirms our first impression that the addition of a second empty vertex/social butterfly grants us significantly more power --- recall that it allows one to connect all configurations of the $15$-puzzle. We show that more generally, two social butterflies are in fact enough to guarantee connectivity for all biconnected graphs except cycles (i.e.\ unlike in \nameref{thm:Wilson}, neither bipartite graphs nor the $\theta_0$ graph are problematic).

\begin{namedlemma}[Stringless Balloon Lemma]
\label{stringless}
Let $2 \leq k\leq n$ and consider a biconnected graph $\mathcal G$ on $n$ vertices which is not a cycle of length strictly greater than $k+1$. The friends and strangers graph $\operatorname{FS}(\mathcal G, \mathcal B_{n-k}^{(k)})$ is connected.
\end{namedlemma}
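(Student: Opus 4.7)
The plan is to prove the lemma by induction on $n = |\mathtt{V}(\mathcal{G})|$, after first using the monotonicity in $k$ of \cref{remark-k} to reduce to the minimum value $k = 2$. We thus work throughout with exactly two social butterflies and $n - 2$ asocial people to rearrange. The base case $n \leq 3$ is immediate: the only biconnected graphs are $K_2$ and $K_3$, leaving at most one asocial person, so the friends-and-strangers graph is trivially connected.

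For the inductive step, I would exploit the fact that any biconnected graph $\mathcal{G}$ that is not a cycle admits an ear decomposition $\mathcal{G} = \mathcal{G}' \cup P$ with at least one ear $P$, whose endpoints lie on a smaller biconnected subgraph $\mathcal{G}'$. When $\mathcal{G}'$ is biconnected and not a forbidden cycle, the inductive hypothesis grants free reconfiguration on $\mathcal{G}'$; the remaining task is to ferry asocial people into and out of the interior of $P$, which I would accomplish via the \nameref{snaketongue} combined with the \nameref{stopwatch} applied to the cycle formed by $P$ together with any path in $\mathcal{G}'$ between the endpoints of $P$ (the rest of $\mathcal{G}'$ playing the role of a generalised ``crown''). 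When $\mathcal{G}'$ is itself a cycle, the graph $\mathcal{G}$ is a theta graph; here I would argue directly, viewing the three internally disjoint paths of the theta as a generalised stopwatch in which two paths form a dial and the third functions as a multi-vertex storage crown, so that a direct analogue of the proof of the \nameref{stopwatch} applies.

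The main obstacle will be combining these local and global operations into a single coherent reconfiguration scheme that realises \emph{all} target configurations, rather than just an even-parity subset. For bipartite biconnected $\mathcal{G}$ such as $K_{2,3}$, \nameref{thm:Wilson} shows that a single butterfly produces a strict parity obstruction, so the proof must explicitly exhibit a parity-breaking manoeuvre using both butterflies: for instance, moving the two butterflies around a common cycle in opposite directions so as to realise an odd transposition of two asocial people while fixing everything else. Verifying that such a parity-breaking manoeuvre is available uniformly over all biconnected non-cycle graphs allowed by the hypothesis, bipartite or not, is the delicate crux of the argument, and is the place where the extra butterfly (as compared to \nameref{thm:Wilson}) really has to be put to work.
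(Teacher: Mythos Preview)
Your approach is genuinely different from the paper's, and considerably more laborious. The paper does not induct on $n$ or use ear decompositions at all: it simply invokes \nameref{thm:Wilson} with a \emph{single} butterfly as a black box. In the non-bipartite (and non-$\theta_0$, non-cycle) case this already gives connectedness; in the bipartite case Wilson's theorem says there are exactly two components, distinguished by parity, so the entire remaining task is to exhibit one parity-changing sequence of swaps. The paper does this by observing that any biconnected non-cycle graph contains a stopwatch subgraph (a shortest cycle plus one incident edge), and the \nameref{stopwatch} then yields \emph{all} permutations of the dial, in particular an odd one. That is the whole proof.

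Your proposal, by contrast, treats the parity-breaking as an open ``delicate crux'' and offers only a vague candidate manoeuvre (two butterflies circulating in opposite directions). You are working harder than necessary: the very \nameref{stopwatch} you plan to use for ferrying already breaks parity, since it realises the full symmetric group on the dial. Once you see this, the ear-decomposition induction becomes unnecessary. There is also a technical wrinkle in your induction: if the last ear $P$ is a single edge with no internal vertex, then $|\mathtt V(\mathcal G')|=|\mathtt V(\mathcal G)|$ and induction on $n$ does not progress; you would need to induct on the number of ears or edges, or argue that an ear with an internal vertex can always be chosen last. None of this is fatal, but it underscores that leaning on Wilson's theorem, as the paper does, is the cleaner route.
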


\begin{proof}[Proof of the \nameref{stringless}]

If $\mathcal G$ happens to be the $\theta_0$ graph, the lemma is already given by \cref{cor}. If $\mathcal G$ is a cycle on fewer than $k+1$ vertices, or if $\mathcal G$ consists of a single edge, the result is trivial. If $\mathcal G\ne \theta_0$ is not bipartite, then $\operatorname{FS}(\mathcal G, \mathcal B_{n-k}^{(k)})$ is connected for any positive $k$ and there is nothing further to prove. So, let $\mathcal G$ be a biconnected bipartite graph on at least three vertices which is not a cycle.

Recall the characterisation of the components of $\operatorname{FS}(\mathcal G, \mathcal B_{n-1}^{(1)})$ in case (2) of \nameref{thm:Wilson} (there are two connected components with an explicit description in terms of parity). Identifying once again $\mathtt V(\mathcal G)$ and $\mathtt V(\mathcal B_{n-k}^{(k)})$ with $\{1,2,\ldots, n\}$, it is enough to show that there exist configurations (seen as permutations in this context) $\sigma$ and $\sigma'$, reachable from each other, which fix a social butterfly on some arbitrary vertex $X$ of $\mathcal G$ but have opposite parities. Because the \nameref{stopwatch} allows full reconfiguration of the stopwatch, and thus allows one to obtain a permutation of arbitrary signature of its dial, it is enough to see that $\mathcal G$ contains a stopwatch. Indeed, $\mathcal G$ is biconnected with at least 3 vertices and thus contains a cycle. Consider a shortest such cycle $\mathcal C$. Since $\mathcal G\ne \mathcal C$ is connected, there is an additional edge incident to $\mathcal C$, with which we may form a stopwatch. 
\end{proof}

\begin{figure}[H]\centering
\includegraphics[page=26]{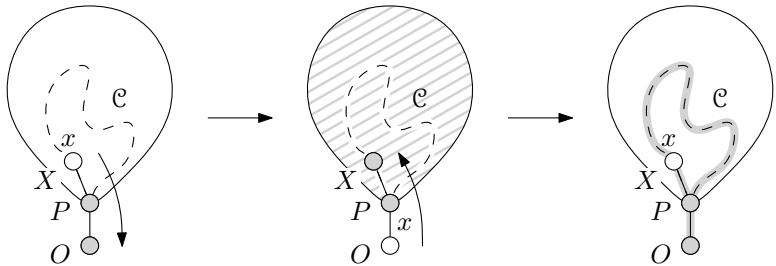}
\caption{: The sequence of reconfigurations from $\sigma$ to $\sigma'$. The grey hatchings indicate the completion of the same configuration as the target configuration $\sigma'$, restricted to the hatched area.}
\label{fig:balloon}
\end{figure}
\vspace{-1em}

\begin{proof}[Proof of the \nameref{balloon}]
Let $\mathcal B$ be a balloon, obtained by adding a leaf $O$ to a vertex $P$ of a biconnected graph $\mathcal G$ on $(n-1)$ vertices. The case where $\mathcal G$ is a cycle is already treated by the the \nameref{stopwatch}, and the case where $\mathcal G$ is an edge is trivial. Suppose then that $\mathcal G$ is a biconnected graph which is not a cycle. We consider two configurations $\sigma$ and $\sigma'$ of $\mathcal B$ with social butterflies positioned on $O$ and $P$, and show that these configurations can be reached from each other (it is always easy to bring two social butterflies to $O$ and $P$).

Since $\mathcal G$ is biconnected, it has a cycle $\mathcal C$ containing $P$. Let us call $X$ one of the two neighbours of $P$ on $\mathcal C$ and call $x$ the person lying on $X$ in the configuration $\sigma$. We begin by swapping $x$ successively with the two social butterflies lying on $P$ and $O$, so that $x$ is now lying on $O$ and the two social butterflies on $P$ and $X$. We may then apply the \nameref{stringless} to reconfigure $\mathcal G$ as we wish, leaving $O$ and $P$ fixed. In particular, we may choose to reach a configuration $\sigma_1$ such that $\sigma_1$ and $\sigma'$ agree on everything but $\mathcal C$, and share the same set of people lying on $\mathcal C$. We then swap back $x$ onto $X$, bringing back the two social butterflies to rest on $O$ and $P$. To finish the proof, we then apply the \nameref{stopwatch} to reconfigure the stopwatch obtained by adding the leaf $O$ at $P$ to the cycle $\mathcal C$ to match ${\sigma'}$ on~$\mathcal{C}$.
\end{proof}

\section{Proof of the Main Theorem}
\label{main}

\subsection*{Rough Sketch of the Proof}\label{sketch} We first recall that $\operatorname{FS}(\mathcal G,\mathcal B_{n-k}^{(k)})$ is disconnected if there exist dark hallways in $\mathcal G$ (\cref{DF-hallways}). This observation combined with \nameref{thm:Wilson} yields a lower bound of $\kappa^*(\mathcal G)$ on the smallest $k$ such that $\operatorname{FS}(\mathcal G,\mathcal B_{n-k}^{(k)})$ is connected. We may then focus on proving the upper bound. Note also that if $\mathcal G$ is not connected, neither is $\operatorname{FS}(\mathcal G,\mathcal B_{n-k}^{(k)})$ (hence the infinite value of $\kappa^*$) and we may thus restrict our attention to connected graphs.

As outlined in \cref{overview}, we wish to bootstrap our analysis of each library. We do this by induction on the number of blocks.

The structure of our proof is as follows. We first isolate a block of $\mathcal G$ corresponding to a leaf of its block graph. Let us call this block $\mathcal L$. Since blocks intersect in unique cut-vertices, $\mathcal L$ intersects its neighbouring block in a single cut-vertex which we denote by $V$. Let us call $\mathcal A$ the subgraph of $\mathcal G$ induced by the union of the complement of $\mathtt V(\mathcal L)$ with $V$. (So, $\mathcal A$ and $\mathcal L$ partition the edges of $\mathcal G$, and they share a single vertex $V$). Our proof then consists of two main ingredients. First, we need to show that we can ferry people across $V$ between $\mathcal A$ and $\mathcal L$ to reach any desired ``people-partition'' between $\mathcal L$ and $\mathcal A$ (the \nameref{B}). More specifically, we will single out a special type of partition which we call \emph{balanced}; such partitions roughly speaking have the social butterflies balanced between the two sides $\mathcal A$ and $\mathcal L$ in a way that facilitates induction (the exact definition will be given shortly thereafter). Second, we need to show that all the configurations which feature the same balanced partition can be reached from one another (the \nameref{A}).

\begin{figure}\centering
  \includegraphics[page=47]{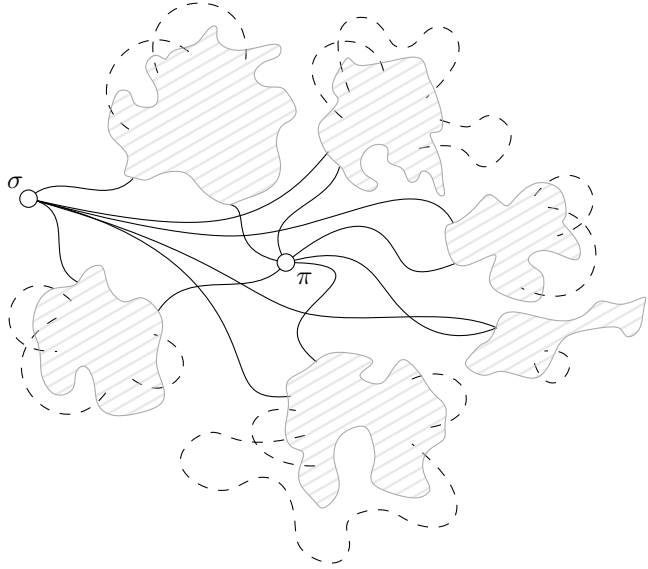}
  \caption{: A pictorial overview of the proof of the main theorem: configurations sharing the same balanced partition form ``islands'' that are connected to one another in $\operatorname{FS}(\mathcal G,\mathcal B_{n-k}^{(k)})$ (hatched in grey). Every non-balanced configuration 
(such as $\sigma$ or $\pi$ in the figure) has a path (a ``ferry'') to all these connected configurations.}
  \label{fig:overview}
\end{figure}

Geometrically, our two steps correspond to showing the existence of subgraphs of $\operatorname{FS}(\mathcal G,\mathcal B_{n-k}^{(k)})$ such that every pair of vertices in these subgraphs can be reached from one another --- note that this might involve leaving the subgraph --- and spelling out a path from any configuration $\sigma$ to any of these subgraphs. We think of our subgraphs as ``islands'', one for each balanced partition, and we think of our paths between islands as ``ferries''; see \cref{fig:overview}. 

\subsection*{The Base Cases}\label{base} Because of the peculiar role played by cycles in our theorem, we need two different base cases for our induction:
\begin{itemize}
\item \textit{Case 1: $\mathcal G$ is an hourglass (which has two blocks).} In this case, $\kappa(\mathcal G)=1$, so $\kappa^*(\mathcal G)=2$, and the \nameref{thm:main} follows from the \nameref{hourglass}.
\item \textit{Case 2: $\mathcal G$ consists of a single block (i.e.\ it is biconnected).} In this case $\kappa(\mathcal G)=1$, but $\kappa^*(\mathcal G)$ depends on the structure of $\mathcal G$.
\begin{itemize}
\item If $\mathcal G\ne \theta_0$ is non-bipartite then $\kappa^*(\mathcal G)=1$ and the desired result follows from \nameref{thm:Wilson}.
\item If $\mathcal G= \theta_0$ or $\mathcal G$ is bipartite then $\kappa^*(\mathcal G)=2$. At least two social butterflies are necessary for connectedness by \nameref{thm:Wilson}, and two social butterflies suffice by the \nameref{stringless}.
\item If $\mathcal G$ is a cycle then $\kappa^*(\mathcal G)=n-1$. It is impossible to change the relative order of the asocial people around the cycle, so $\operatorname{FS}(\mathcal G,\mathcal B_{n-k}^{(k)})$ is connected if and only if $k\ge n-1$ (i.e.\ there is at most one asocial person).
\end{itemize}
\end{itemize}

\subsection*{The Induction Step} 
Now, fix a connected graph $\mathcal G$ on $n$ vertices, which is not an hourglass and has at least two blocks. Suppose that the \nameref{thm:main} holds for all subgraphs of $\mathcal G$ with fewer blocks than $\mathcal G$. We will prove the \nameref{thm:main} for $\mathcal G$.

Let us fix $k\leq n$ and assume that $\mathcal G$ does not contain any dark hallways. Our goal is to prove that $\operatorname{FS}(\mathcal G,\mathcal B_{n-k}^{(k)})$ is connected as in the \nameref{sketch}. Fix a leaf $\mathcal L$ of the block graph of $\mathcal G$, and let $\mathcal A$ be subgraph induced by the union of the complement of $\mathcal L$ with $V$, where $V$ is the unique cut-vertex separating $\mathcal A$ from $\mathcal L$. Since $\mathcal G$ is not an hourglass, we may choose the leaf $\mathcal L$ in such a way that $\mathcal A$ is not a cycle.

We define a \textit{block party} to be a partition $(\mathcal X,\mathcal Y)$ of $\mathtt V(\mathcal B_{n-k}^{(k)})$ into two sets of size $|\mathcal X|=|\mathcal A|$ and $|\mathcal Y|=| \mathcal L|-1$. For a block party $(\mathcal X,\mathcal Y)$ we then say that a configuration $\sigma$ is $(\mathcal X,\mathcal Y)$-\textit{balanced} if the following conditions hold:

\begin{enumerate}
    \item The people of $\mathcal X$ are the ones lying in $\mathcal A$, i.e.\ $\sigma(\mathtt V(\mathcal A))=\mathcal X$.
    \item The people of $\mathcal Y$ are the ones lying in $\mathcal L-\{V\}$, i.e.\ $\sigma(\mathtt V(\mathcal L) -\{V\})=\mathcal Y$.
    \item There is a social butterfly lying on the cut-vertex $V$ and on a vertex of $\mathcal A$ adjacent to $V$.
    \item Either there are no social butterflies in $\mathcal Y$, or $\mathcal X$ consists entirely of social butterflies (roughly speaking, we want all the social butterflies to be in $\mathcal A$, but if there are more than $|\mathcal A|$ social butterflies we allow some of them to overflow into $\mathcal Y$).
\end{enumerate}

If $(\mathcal X,\mathcal Y)$ has the right number of social butterflies in $\mathcal X$ and in $\mathcal Y$ for a $(\mathcal X,\mathcal Y)$-balanced configuration to exist, then we say $(\mathcal X,\mathcal Y)$ is a \emph{balanced block party}. Furthermore, we can associate to every configuration $\sigma$ a unique block party $(\mathcal X_{\sigma}, \mathcal Y_{\sigma}) \coloneqq (\sigma(\mathtt V(\mathcal A)), \sigma(\mathtt V(\mathcal L)-\{V\}) )$. For each balanced block party $(\mathcal X,\mathcal Y)$, we define the $(\mathcal X,\mathcal Y)$-\emph{cluster} to be the set of configurations whose associated block party is $(\mathcal X,\mathcal Y)$.

As outlined in the \nameref{sketch} at the start of this section, there are two main ingredients to our proof.

\begin{namedlemma}[Island Lemma]
  \label{A}
  For every balanced block party $\left(\mathcal X, \mathcal Y \right)$, all the configurations in the $(\mathcal X,\mathcal Y)$-cluster are part of the same connected component of $\operatorname{FS}(\mathcal G,\mathcal B_{n-k}^{(k)})$.
\end{namedlemma}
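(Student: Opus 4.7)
The plan is to show that any two configurations $\sigma$ and $\sigma'$ in the $(\mathcal X,\mathcal Y)$-cluster are reachable from each other by a two-phase reconfiguration that first synchronises the restriction to $\mathcal L$ and then synchronises the restriction to $\mathcal A$. Because the cluster is non-empty (as $(\mathcal X,\mathcal Y)$ is balanced), $\mathcal X$ contains at least two butterflies, and a butterfly can be freely shuttled along any path inside the connected graph $\mathcal A$ since it is friends with everyone. By standard pebble-motion arguments applied to two interchangeable butterflies in $\mathcal A$, I can first steer each of $\sigma$ and $\sigma'$ into a balanced configuration by depositing one butterfly on $V$ and another on a chosen neighbour $W\in\mathcal A$ of $V$. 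Without loss of generality, I therefore assume $\sigma$ and $\sigma'$ already place butterflies at both $V$ and $W$.

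In Phase~1, I work inside the balloon $\mathcal Q := \mathcal L\cup\{W\}$, viewed as having the biconnected body $\mathcal L$ with leaf $W$ attached at $V$. The two butterflies at $V$ and $W$ (together with any overflow butterflies in $\mathcal Y$) furnish the at least two butterflies needed by the \nameref{balloon}, which therefore enables all permutations of the people currently on $\mathcal Q$ using only edges of $\mathcal Q$. I select the arrangement that matches $\sigma'$ on every vertex of $\mathcal L-\{V\}$ and still places butterflies at $V$ and $W$: this is feasible because $\sigma$ and $\sigma'$ carry the same multiset of people on $\mathcal Q$, namely the people of $\mathcal Y$ together with two butterflies. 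Crucially, Phase~1 only touches vertices of $\mathcal Q$, so the assignment on $\mathcal A-\{W\}$ is left untouched.

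In Phase~2, I use only edges of $\mathcal A$ to rearrange the people of $\mathcal X$ to match $\sigma'$ on $\mathcal A$. If balancedness has forced $\mathcal X$ to consist entirely of butterflies (which happens as soon as any butterfly overflows into $\mathcal Y$), every edge of $\mathcal A$ supports a legal swap, and so every permutation is achievable via adjacent transpositions along a spanning tree of $\mathcal A$. Otherwise all $k$ butterflies lie in $\mathcal A$, and since $\mathcal A$ has strictly fewer blocks than $\mathcal G$, the induction hypothesis yields connectivity of $\operatorname{FS}(\mathcal A, \mathcal B_{|\mathcal A|-k}^{(k)})$ as soon as $\kappa^*(\mathcal A)\le k$. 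Since Phase~2 uses only edges of $\mathcal A$, the arrangement on $\mathcal L-\{V\}$ installed in Phase~1 is preserved, and the concatenation of the two phases produces $\sigma'$.

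The main obstacle I anticipate is establishing the inequality $\kappa^*(\mathcal A)\le k$ in that last case. Any hallway of $\mathcal A$ in which $V$ does not appear as an interior vertex is automatically a hallway of $\mathcal G$ (only $V$'s degree differs between $\mathcal A$ and $\mathcal G$, so all other interior vertices remain bare and both endpoints remain cut-vertices of $\mathcal G$), and so its order is bounded by $\kappa(\mathcal G)\le k-1$. The delicate situation is when $V$ has degree $2$ in $\mathcal A$ and sits interior to a hallway of $\mathcal A$, which could a priori inflate $\kappa(\mathcal A)$ above $\kappa(\mathcal G)$; the remedy is to exploit the freedom in selecting $\mathcal L$ among the at least two leaves of the block graph of $\mathcal G$, requiring not only that $\mathcal A$ is not a cycle but also that $V$ ends up with degree either $1$ or at least $3$ in $\mathcal A$ so that it cannot lie in the interior of any hallway of $\mathcal A$. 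The remaining exceptional cases then fall into line: $\mathcal A$ disconnected is impossible since $V$ is the unique attachment point of $\mathcal L$; $\mathcal A$ a cycle is excluded by our standing choice; and $\mathcal A$ bipartite or equal to $\theta_0$ gives $\kappa^*(\mathcal A)=2\le k$, using that $k\ge\kappa^*(\mathcal G)\ge 2$ because $\mathcal G$ has at least two blocks and hence a cut-vertex, which alone is a hallway of order~$1$.
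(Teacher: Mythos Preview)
Your argument follows the paper's almost exactly: rearrange one side via the Balloon Lemma and the other via the induction hypothesis. The only difference is the order---you handle $\mathcal L$ first and $\mathcal A$ second, whereas the paper does the reverse---and this is immaterial.

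You are in fact more careful than the paper on one subtle point. The paper simply asserts that ``$\mathcal G$ does not have any dark hallways, and thus neither does $\mathcal A$''; you correctly observe that removing $\mathcal L$ might drop the degree of $V$ to $2$ in $\mathcal A$ and thereby fuse two short hallways of $\mathcal G$ (each ending at $V$) into one long hallway of $\mathcal A$ passing through $V$. However, your proposed fix---further constraining the choice of $\mathcal L$ so that $\deg_{\mathcal A}(V)\ne 2$---is asserted without justification, and as literally stated it is not always achievable (for instance when $\mathcal G$ has exactly two blocks meeting at $V$, and $V$ happens to have degree $2$ in the non-cycle block). What actually suffices is the weaker requirement that $V$ not lie in the \emph{interior} of any hallway of $\mathcal A$; this holds automatically whenever the $\mathcal A$-edges at $V$ belong to a single biconnected block of $\mathcal A$ with at least three vertices (they then sit on a cycle), which already disposes of the counterexample just given. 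The only genuinely delicate case is when $V$ is incident in $\mathcal A$ to exactly two bridge-edges, and here one can show that a different leaf of the block tree avoids the problem (taking $\mathcal L$ at an end of a longest path in the block tree works). So your remedy points in the right direction but needs a bit more care to pin down; once it is, your proof and the paper's coincide.
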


\begin{namedlemma}[Ferrying Lemma]
  \label{B}
  For every configuration $\sigma$, and for every balanced block party $\left(\mathcal X, \mathcal Y \right)$, there exists a path in $\operatorname{FS}(\mathcal G,\mathcal B_{n-k}^{(k)})$ between $\sigma$ and the $(\mathcal X,\mathcal Y)$-cluster.
\end{namedlemma}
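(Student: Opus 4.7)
The plan is to build a path from any configuration $\sigma$ to the $(\mathcal X,\mathcal Y)$-cluster in three stages, exploiting the cut-vertex structure at $V$.

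First, I would bring a social butterfly to the cut-vertex $V$. Since social butterflies are adjacent to every other vertex of $\mathcal B_{n-k}^{(k)}$, a butterfly can swap across any edge of $\mathcal G$, and so by walking along a path in the connected graph $\mathcal G$ a butterfly can be routed to $V$ (while displacing some people along the way, which is harmless at this stage).

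Second, I would perform a sequence of cross-swaps through $V$ to correct the partition. Let $m$ denote the number of $\mathcal Y$-people currently placed in $\mathcal A$; since $|\mathcal X|=|\mathcal A|$ and $|\mathcal Y|=|\mathcal L|-1$, this number must also equal the number of $\mathcal X$-people currently placed in $\mathcal L-V$, so mismatches can be paired. For each such pair $(p,q)$ with $p\in\mathcal Y\cap\mathcal A$ and $q\in\mathcal X\cap(\mathcal L-V)$, a single cross-swap is designed to send $p$ into $\mathcal L$ and pull $q$ into $\mathcal A$. To implement it, I would apply the inductive hypothesis of the \nameref{thm:main} to $\mathcal L$ (a single block, hence a base case) to route $q$ to an $\mathcal L$-neighbor of $V$, swap $q$ with the butterfly sitting at $V$, then symmetrically apply the inductive hypothesis to $\mathcal A$ (which has strictly fewer blocks than $\mathcal G$) to route $p$ to an $\mathcal A$-neighbor of $V$, before completing the exchange across $V$. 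Each such cross-swap reduces $m$ by one, and I would coordinate the moves so that a butterfly remains on $V$ at the end of the stage.

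Third, with the partition now matching $(\mathcal X,\mathcal Y)$, I would redistribute butterflies within $\mathcal A$ alone so that one lies on $V$, a second lies on a chosen neighbor $W$ of $V$ in $\mathcal A$, and the remaining $k-2$ butterflies sit inside $\mathcal A$ (in the overflow case $k>|\mathcal A|$ all of $\mathcal A$ is already filled with butterflies, and nothing remains to do). This internal shuffle is safe because all moves occur within $\mathcal A$, preserving the correct partition across $V$.

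The main obstacle will be justifying the applications of the inductive hypothesis in the second stage. I need the absence of dark hallways in $\mathcal G$ to be inherited by $\mathcal A$, so that $k$ is large enough to reconfigure $\mathcal A$ via the \nameref{thm:main}; this should follow from the fact that the block and cut-vertex structure of $\mathcal A$ essentially mirrors that of $\mathcal G$ on $\mathtt V(\mathcal A)$, up to a possible alteration of $V$'s role. Even more delicately, I must ensure that enough social butterflies sit on each of $\mathcal A$ and $\mathcal L$ at the moment their respective reconfigurations are invoked, which requires careful bookkeeping as butterflies are shuttled across $V$ between cross-swaps. The key leverage is that $k\geq 2$ in the induction step, and that butterflies can always be temporarily transferred from one side to the other through $V$ on demand.
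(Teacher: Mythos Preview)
Your high-level strategy --- reach \emph{some} balanced block party, then trade asocial people across $V$ one pair at a time until the target block party is achieved --- is exactly what the paper does. The gap is in how a single trade is implemented.

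In your second stage you invoke the base case of the \nameref{thm:main} on $\mathcal L$ to route $q$ to an $\mathcal L$-neighbour of $V$. But $\mathcal L$ can be a cycle, and then the base case reads $\kappa^*(\mathcal L)=|\mathcal L|-1$: the friends-and-strangers graph on $\mathcal L$ is disconnected with any fewer than $|\mathcal L|-1$ butterflies. Since nothing in the setup bounds $|\mathcal L|$ in terms of $k$, the \nameref{thm:main} on $\mathcal L$ gives you nothing here, and shuttling all $k$ butterflies into $\mathcal L$ does not help. (Even when $\mathcal L$ is bipartite and not a cycle, a single butterfly at $V$ lands you in case~(2) of \nameref{thm:Wilson}, so the appeal to connectivity of $\operatorname{FS}(\mathcal L,\cdot)$ again fails as stated.) The paper never applies induction or the base case to $\mathcal L$. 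Instead it attaches one extra vertex $X''\in\mathcal A$ to $V$ as a leaf and invokes the \nameref{balloon}: with just two butterflies (on $V$ and $X''$) the balloon $\mathcal L\cup\{X''\}$ is fully reconfigurable regardless of whether $\mathcal L$ is a cycle, bipartite, or $\theta_0$.

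The ``careful bookkeeping'' you defer is also where the paper is concrete. Shuttling a butterfly across $V$ pushes an asocial person the other way and disturbs the partition you are building, so the paper first brings \emph{all} butterflies into $\mathcal A$ (thereby reaching some balanced $(\mathcal X',\mathcal Y')$) before attempting any trades. For each trade of $x\in\mathcal X'$ with $y\in\mathcal Y'$, it then uses the inductive hypothesis on $\mathcal A$ (where all $k$ butterflies now sit) to load a short snake tongue $\mathcal D_m\subseteq\mathcal G$, $m\le k+1$, straddling $V$: the asocial $x$ at one tip in $\mathcal A$, butterflies on the remaining $m-2$ vertices in $\mathcal A$, and the single back vertex in $\mathcal L$. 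After the \nameref{balloon} brings $y$ to this back vertex, the \nameref{snaketongue} swaps $x$ and $y$. At every checkpoint all $k$ butterflies remain in $\mathcal A$, so the induction on $\mathcal A$ is available again for the next trade.
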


\begin{proof}[Proof of the \nameref{A}]
Let $\sigma$ and $\sigma'$ be two $(\mathcal X,\mathcal Y)$-balanced configurations. Our aim is to construct a sequence of swaps from $\sigma$ to $\sigma'$. 

\begin{figure}[H]\centering
\includegraphics[page=38]{Sliding-Blocks-Puzzles.pdf}
\caption{: The sequence of reconfigurations from $\sigma$ to $\sigma'$. The hatchings indicate the completion of the same configuration as the target configuration $\sigma'$, restricted to the hatched area.}
\label{fig:claim-1-2}
\end{figure}
\vspace{-1em}

By assumption, $\mathcal G$ does not have any dark hallways, and thus neither does $\mathcal A$. Since $\mathcal A$ is not a cycle, we may use our induction assumption to reconfigure $\mathcal A$ in isolation to reach a $(\mathcal X,\mathcal Y)$-balanced configuration $\sigma_1$ which agrees with $\sigma'$ on $\mathcal A$ (i.e.\ ${\sigma_1}|_{\mathcal A}={\sigma'}|_{\mathcal A}$). Since $\sigma_1$ is balanced, we can select a neighbour $P$ of $V$ in $\mathcal A$ supporting a social butterfly. The subgraph induced by the union of $P$ with $\mathcal L$ forms a balloon and two social butterflies lie on $P$ and $V$. Thus, we may apply the \nameref{balloon} to reconfigure $\mathcal L-\{V\}$ as needed and reach $\sigma'$.
\end{proof}

We have now verified that all configurations lying inside a cluster of balanced configurations corresponding to a fixed balanced block party are connected to one another. To complete the proof, we exhibit a path from any configuration to any balanced cluster.

\begin{proof}[Proof of the \nameref{B}]
Let us fix a configuration $\sigma$ and its associated block party $(\mathcal X_{\sigma}, \mathcal Y_{\sigma})$, together with the target balanced block party $(\mathcal X,\mathcal Y)$. We begin by successively bringing as many as possible of $k$ available social butterflies to $\mathcal A$, thereby reaching an $(\mathcal X',\mathcal Y')$-balanced configuration $\sigma'$ for some balanced block party $(\mathcal X',\mathcal Y')$. We want to show that for any asocial people $x\in \mathcal X$ and $y\in \mathcal Y$, there is always an sequence of swaps which allows us to alter $(\mathcal X',\mathcal Y')$ by ``trading $x$ with $y$'', as follows.

\begin{claim*}
\label{trade}
For all asocial $x\in \mathcal X'$ and $y\in \mathcal Y'$, there exists a sequence of swaps which takes $\sigma'$ to some $(\mathcal X'\cup\{y\}-\{x\}, \mathcal Y'\cup\{x\}-\{y\})$-balanced configuration.
\end{claim*}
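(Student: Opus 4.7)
My plan is to trade $x$ and $y$ by staging the swap through the cut-vertex $V$: first moving $y$ from $\mathcal L - \{V\}$ into $\mathcal A$ at the cost of placing one social butterfly into $\mathcal L - \{V\}$; then reconfiguring within $\mathcal A$ to bring $x$ to $V$; and finally pushing $x$ out into $\mathcal L - \{V\}$ while recovering the displaced social butterfly.

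Since $x$ is asocial, condition~(4) of balance forces all $k$ social butterflies to sit in $\mathcal A$, and by condition~(3) two of them lie at $V$ and at some neighbor $P$ of $V$ in $\mathcal A$. I first invoke the \nameref{balloon} on $\mathcal L \cup \{P\}$ (a balloon with biconnected body $\mathcal L$ and leaf $P$, containing exactly these two social butterflies) to reach a configuration in which $y$ sits at $P$, one social butterfly remains at $V$, and the other moves to a chosen neighbor $L$ of $V$ inside $\mathcal L$. At this point $y$ has entered $\mathcal A$ at $P$, and $\mathcal A$ carries $k-1$ social butterflies. I then apply the induction hypothesis for the \nameref{thm:main} to $\mathcal A$ (which has strictly fewer blocks than $\mathcal G$) to reconfigure within $\mathcal A$, bringing $x$ to $V$, placing $y$ at some vertex of $\mathcal A - \{V, P\}$, and seating a social butterfly at $P$. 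A single friendly swap across the edge $VL$ then places $x$ at $L \in \mathcal L - \{V\}$ and restores a social butterfly to $V$. The resulting configuration has social butterflies at $V$ and $P$, asocial $x$ at $L$, asocial $y$ somewhere in $\mathcal A$, and all other asocials in $\mathcal L - \{V\}$, so it is $(\mathcal X' \cup \{y\} - \{x\}, \mathcal Y' \cup \{x\} - \{y\})$-balanced as required.

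The main obstacle is justifying the induction step: after the balloon reconfiguration, $\mathcal A$ contains only $k-1$ social butterflies, so I need $k-1 \ge \kappa^*(\mathcal A)$ for $\operatorname{FS}(\mathcal A,\mathcal B_{|\mathcal A|-(k-1)}^{(k-1)})$ to be connected under the induction hypothesis. This is not automatic from $k \ge \kappa^*(\mathcal G)$, because a hallway of $\mathcal A$ passing through $V$ as an interior vertex can be longer than any hallway of $\mathcal G$ (since $V$'s extra edges into $\mathcal L$ break barenness in $\mathcal G$ but not in $\mathcal A$). I would address this either by refining the initial choice of the leaf block $\mathcal L$ so that $\kappa^*(\mathcal A) \le k-1$, or by treating $\mathcal L$ (together with the freed-up social butterfly at $L$) as an auxiliary bypass for $V$, effectively simulating inside $\mathcal G$ the reconfigurations of $\mathcal A$ that the induction hypothesis alone cannot supply.
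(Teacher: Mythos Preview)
Your approach differs from the paper's in the order of operations, and the gap you flag is real and is not closed by what you have written.

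The paper first applies the induction hypothesis to $\mathcal A$ while all $k$ social butterflies are still in $\mathcal A$: it locates a short snake-tongue subgraph $\mathcal D_m$ (with $4\le m\le k+1$) whose degree-$3$ vertex lies in $\mathcal A$ near $V$ and whose tail sticks one edge into $\mathcal L$; it then reconfigures $\mathcal A$ to place $x$ at a tip of $\mathcal D_m$ and butterflies on the remaining vertices of $\mathcal D_m\cap\mathcal A$. Only afterwards does it invoke the Balloon Lemma on $\mathcal L\cup\{X''\}$ to bring $y$ to the tail vertex $Y\in\mathcal L$, and finally the Snake Tongue Lemma swaps $x$ and $y$ across $V$. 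The crucial point is that the inductive reconfiguration of $\mathcal A$ uses the full complement of $k$ butterflies.

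By contrast, you apply the Balloon Lemma first, which exports one butterfly to $L\in\mathcal L$ and leaves only $k-1$ in $\mathcal A$ for the inductive step. Your diagnosis of the difficulty is in fact too narrow: even ignoring hallways through $V$, any hallway of $\mathcal G$ of order exactly $k-1$ lying wholly inside $\mathcal A$ already gives $\kappa(\mathcal A)\ge k-1$, hence $\kappa^*(\mathcal A)\ge k>k-1$. Your fix~(a) therefore cannot work in general---if $\mathcal G$ consists of three libraries chained by two hallways each of order $k-1$, then whichever end library you take for $\mathcal L$, the other long hallway survives intact in $\mathcal A$. Your fix~(b) is morally the right idea (the butterfly parked at $L$ together with the structure of $\mathcal L$ can indeed compensate near $V$), but turning this into an argument is precisely the content of the paper's snake-tongue construction, and you have not supplied it.
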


The above claim suffices to prove the \nameref{A}: indeed, we can reach $(\mathcal X,\mathcal Y)$ from $(\mathcal X',\mathcal Y')$ by repeatedly trading people between the two parts.

\begin{proof}[Proof of the claim]
First, note that if $\mathcal A$ is a path, then the assumption that there do not exist any dark hallways forces $\mathcal A$ to be a path of length at most $k$, and the desired result is trivial (in the $(\mathcal X',\mathcal Y')$-balanced configuration $\sigma'$, only social butterflies lie in $\mathcal A$).

So, we may assume $\mathcal A$ is not a path. Similarly to the proof of the \nameref{hourglass}, we find an adequately short snake tongue subgraph in $\mathcal G$, straddling $V$ and extending into both $\mathcal A$ and $\mathcal L$ (see \cref{fig:claim-3-2}), so that we may use the \nameref{snaketongue} to ferry people across $V$. To be precise, the fact we need is as follows.

\begin{fact*}
There always exists a snake tongue subgraph $\mathcal D_m$, $4 \leq m \leq k+1$, such that the entirety of $\mathcal D_m$ is contained in $\mathcal A$, save for the very last edge of the ``back'' of the tongue which lies in $\mathcal L$.
\end{fact*}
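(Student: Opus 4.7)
The plan is to construct the snake tongue greedily from $V$ outward: I take the back-leaf $p_1$ to be any neighbour of $V$ in $\mathcal L$ (such a neighbour always exists because $\mathcal L$ is a block of $\mathcal G$ containing $V$ and at least one other vertex), and try to place the branching vertex $p_{m-2}$ as close to $V$ inside $\mathcal A$ as possible. I split into two cases according to the degree of $V$ in $\mathcal A$. If $V$ has $\geq 2$ neighbours in $\mathcal A$, then $V$ itself serves as the branching: I set $p_2 = p_{m-2} = V$, with $p_3$ and $q$ two distinct $\mathcal A$-neighbours of $V$, producing the minimal $\mathcal D_4$-tongue.

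The substantive case is when $V$ has a unique $\mathcal A$-neighbour $v_1$. Starting at $V$, I follow the canonical bare walk $V = v_0, v_1, v_2, \ldots, v_L$ in $\mathcal A$, stepping at each stage to the unique other degree-$2$ neighbour and terminating at the first vertex $v_L$ of $\mathcal A$-degree $\neq 2$. Since $\mathcal A$ is connected and not a path, $v_L$ must have $\mathcal A$-degree $\geq 3$: if instead $v_L$ had degree $1$, the traced walk would exhaust the connected component of $V$ in $\mathcal A$ and exhibit $\mathcal A$ itself as a path, contrary to hypothesis. I then pick the two tip-leaves $p_{m-1}, q$ to be two $\mathcal A$-neighbours of $v_L$ distinct from $v_{L-1}$ (which exist by $\deg v_L \geq 3$), yielding a tongue of size $m = L+4$.

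To bound $m$, the central observation is that the bare walk $V, v_1, \ldots, v_L$ is itself a hallway of $\mathcal G$: its endpoints are cut-vertices of $\mathcal G$ (respectively separating $\mathcal L$ from $\mathcal A$, and separating the tail $V, v_1, \ldots, v_{L-1}$ together with $\mathcal L$ from the remainder of $\mathcal A$ through $v_L$); its interior vertices have degree $2$ in $\mathcal G$, inherited from $\mathcal A$ because $\mathcal A \cap \mathcal L = \{V\}$ precludes any additional edges; and it lies in no cycle of $\mathcal G$, since any alternative $V$-to-$v_L$ route would have to re-enter $\mathcal A$ through the unique $\mathcal A$-neighbour $v_1$ of $V$. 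The no-dark-hallway hypothesis then restricts its order, translating into a bound on $L$ and hence on $m$.

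The main technical hurdle I anticipate is extracting the exact bound $m \leq k+1$ rather than the bound $m \leq k+2$ that a direct accounting yields (the hallway has order $L+1 \leq k-1$, giving $m = L+4 \leq k+2$). I expect this final tightening to come from either choosing the leaf block $\mathcal L$ more judiciously among the available leaves of the block tree (preferring a leaf whose attaching cut-vertex already has $\mathcal A$-degree $\geq 2$, which reduces immediately to the easy case with $m = 4$), or from strengthening the hallway bookkeeping by the additional edge contributed by $\mathcal L$ when $\mathcal L$ is a pendant edge and $V$ has total degree $2$ in $\mathcal G$, so that the bare walk effectively extends one step further into $\mathcal L$.
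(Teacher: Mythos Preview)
Your approach is exactly the paper's: split on whether $V$ has one or at least two neighbours in $\mathcal A$, and in the single-neighbour case walk along the unique bare path $V=v_0,v_1,\dots,v_L$ until the first vertex $v_L$ of degree $\ge 3$, then argue that $V,v_1,\dots,v_L$ is a hallway of $\mathcal G$ and invoke the no-dark-hallway hypothesis. Your verification that this path is a hallway (both endpoints cut-vertices, bare interior, not on a cycle) is correct, and the degree-$1$ alternative for $v_L$ correctly forces $\mathcal A$ to be a path.

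Where you get stuck is exactly where the paper's own statement has an off-by-one slip. The hallway $V,\dots,v_L$ has order $L+1$, and ``no dark hallway'' gives $L+1\le k-1$, hence $m=L+4\le k+2$. This is what the paper actually proves too (its phrasing ``within distance at most $k-1$ of $V$'' together with the depth-$(k-1)$ contradiction yields the same $L\le k-2$). The stated bound $m\le k+1$ cannot be right as written: it is vacuous already for $k=2$, since $m\ge 4$, while the surrounding Claim is certainly meant to apply when $k=2$.

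What is genuinely needed downstream is only $m-2\le k$, because the \nameref{snaketongue} requires $m-2$ social butterflies on $\mathcal D_m$; the sentence in the paper ``saturate the rest of $D_m\cap\mathcal A$ with $m-1\le k$ social butterflies'' should read $m-2$ (there are $m-1$ vertices of $D_m$ in $\mathcal A$, one of which carries the asocial person $x$). So $m\le k+2$ suffices, and you have proved it.

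Your two proposed fixes are therefore unnecessary, and in any case they do not work as stated: the leaf block $\mathcal L$ is fixed before the Fact is invoked (it is chosen once, subject only to ``$\mathcal A$ is not a cycle''), so you cannot re-choose it here; and extending the bare walk one step into a pendant edge $\mathcal L$ does not produce a longer hallway, since the resulting endpoint in $\mathcal L$ is a leaf of $\mathcal G$ and hence not a cut-vertex.
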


\begin{figure}[H]\centering
\includegraphics[page=46]{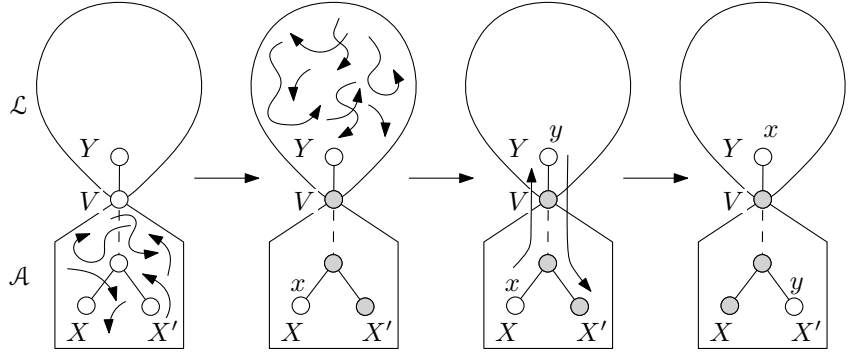}
\caption{: The sequence of swaps to alter $(\mathcal X', \mathcal Y')$ by trading $x$ with $y$.}
\label{fig:claim-3-2}
\end{figure}
\vspace{-1em}

This snake tongue can be found by exploring from $V$ into $\mathcal A$. If $V$ has two neighbours in $\mathcal A$ we have immediately found a suitable snake tongue $\mathcal D_4$, and otherwise the desired snake tongue amounts to a vertex of degree $d\geq 3$ in $\mathtt V(\mathcal A)\cup \{V\}$ within distance at most $k-1$ of $V$. If we cannot find such a snake tongue, then the depth-$(k-1)$ neighbourhood of $\mathcal V$ in $\mathcal A$ must be a path. Since $\mathcal A$ has no dark hallways, this means $\mathcal A$ itself must be a path, contradicting the assumption we made at the start of the proof of the claim.

Proceeding with the proof of the claim, let us write $X$ and $X'$ for the two vertices at the tip of the snake tongue graph $D_m$, and $Y$ for its back vertex (see \cref{fig:claim-3-2}). Since $\mathcal A$ is not a cycle, our induction assumption guarantees that we may reconfigure $\mathcal A$ as desired. In particular, we may bring any person $x\in \mathcal X'$ to $X$ and saturate the rest of $D_m\cap \mathcal A$ with $m-1\leq k$ social butterflies. Note that $V$ has a neighbour $X''\neq X$ in $D_m$ (if $m=4$ we take $X''=X'$, and otherwise we let $X'$ be a vertex on the path section of the snake tongue). The subgraph induced by $X''$ and $\mathcal L$ is a balloon, and two social butterflies now lie on $X''$ and $V$. We may thus apply the \nameref{balloon} in $\mathcal L$ to bring any person $y\in \mathcal Y'$ to $Y$. The \nameref{snaketongue} then allows us to reconfigure $D_m$ so that $y$ now lies on $X$ and $x$ on $Y$. This final configuration $\sigma_{x,y}$ satisfies the conclusion of the claim.
\end{proof}
\end{proof}

\section{A simple algorithm to determine when two configurations are connected}
\label{connected components}

The ideas in the proof of our \nameref{thm:main} can be straightforwardly adapted to provide an algorithm that determines whether two configurations can be reached from each other. 

\begin{figure}[H]\centering
\includegraphics[page=36]{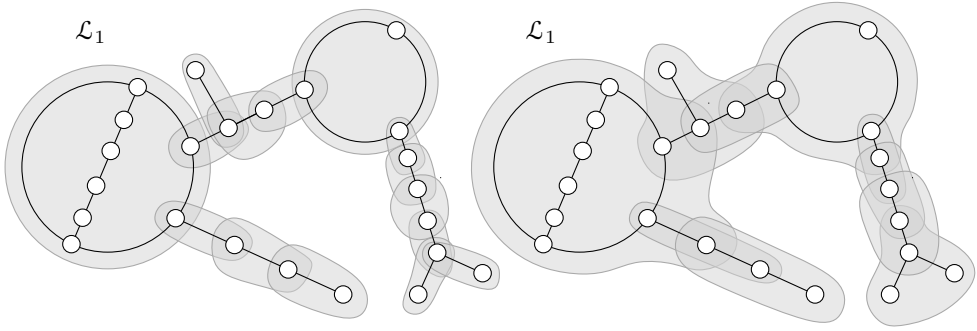}
\caption{: The social circles of the graph $\mathcal G$ for $k=1$ (left diagram) and $k=2$ (right diagram).}
\label{fig:social-circles-1}
\end{figure}
\vspace{-1em}

Given a connected graph $\mathcal G$ (and some $k$), a connected subgraph $\mathcal G'$ of $\mathcal G$ is called \emph{well-lit} if it does not contain any dark hallways (for the subgraph). A maximal well-lit subgraph is called a \emph{social circle} (see \cref{fig:social-circles-1}). By construction, the social circles of a graph $\mathcal G$ cover all of its vertices. It is easy to see that all social circles contain at least $k$ vertices (in fact, unless $k=n$, each social circle contains at least $k+1$ vertices, but we will not need this fact).

Setting aside issues related to the exceptions in \nameref{thm:Wilson}, our \nameref{thm:main} allows one to freely reconfigure inside each social circle. Thus, to check that two configurations can be reached from each other, we need only verify that each social circle contains the same set of people in both configurations, and further consider the conditions of \nameref{thm:Wilson} within each social circle.

To be more precise, consider the social circles $\mathcal L_1, \mathcal L_2, \ldots, \mathcal L_m$ of $\mathcal G$. For all $i\in [m]$, we arbitrarily fix a set $\mathcal O_i$ of $k$ vertices in the social circle $\mathcal L_i$. The key observation is that if all $k$ social butterflies lie on the vertices in $\mathcal O_i$, then none of the other asocial people who are in $\mathcal L_i$ at that point can ever leave $\mathcal L_i$ (since asocial people cannot traverse dark hallways). That is to say, if we have two configurations $\sigma_i,\tau_i$ for which the $k$ social butterflies lie in $\mathcal O_i$, then we have no hope of reaching $\tau_i$ from $\sigma_i$ unless $\mathcal L_i$ contains the same set of people in $\sigma_i$ and $\tau_i$. But note that for \emph{any} configurations $\sigma,\tau$, we can always reach some configuration $\sigma_i$ from $\sigma$, and some configuration $\tau_i$ from $\tau$, such that $\sigma_i,\tau_i$ have the $k$ social butterflies lying in $\mathcal O_i$ (indeed, social butterflies can move freely through the graph, displacing asocial people as they move).

Below we present an algorithm applying these observations to check if two configurations can be reached from each other. For ease of notation, given a social circle $\mathcal L$ in a graph $\mathcal G$, and some $\sigma\in \mathtt V(\operatorname{FS}(\mathcal G,\mathcal B_{n-k}^{(k)}))$, we write $\sigma|_\mathcal L$ for the restriction of $\sigma$ to the vertices in $\mathcal L$. If we write $n_{\mathcal L}$ and $k_{\sigma,\mathcal L}$ for the number of vertices in $\mathcal L$ and the number of social butterflies in $\mathcal L$ with respect to $\sigma$, then $\sigma(\mathtt V(\mathcal L))$ is a set of $n_{\mathcal L}$ people including $k_{\sigma,\mathcal L}$ social butterflies. We can therefore identify $\sigma(\mathtt V(\mathcal L))$ with the vertex set of $\mathcal B_{n_{\mathcal L}-k_{\sigma,\mathcal L}}^{(k_{\sigma,\mathcal L})}$, and view $\sigma|_{\mathcal L}$ as a vertex of $\operatorname{FS}(\mathcal L,\mathcal B_{n_{\mathcal L}-k_{\sigma,\mathcal L}}^{(k_{\sigma,\mathcal L})})$.

\begin{algorithm}[H]
	\caption*{\textbf{Connect}\boldmath $\pmb(\sigma, \tau,\mathcal G, k \pmb )$ Checking if two configurations $\sigma$ and $\tau$ are connected} 
	\begin{algorithmic}
        \State Consider the social circles $\mathcal L_1, \mathcal L_2, \ldots, \mathcal L_m$ of $\mathcal G$.
        \State Fix a set $\mathcal O_i$ of $k$ vertices in each social circle $\mathcal L_i$.
		\For {$i=1,2,\ldots, m$}
            \State \parbox[t]{313pt}{In both $\sigma$ and $\tau$, move all social butterflies to $\mathcal O_i$ (arbitrarily) and consider the resulting configurations $\sigma_i,\tau_i$.}
            \vspace{0.2em}
            \State Run \textbf{SocialCircle}\boldmath $\pmb({\sigma_i}, {\tau_i}, {\mathcal L_i}, {k}\pmb)$
        \EndFor
        \State Return ``CONNECTED''
	\end{algorithmic} 
\end{algorithm}

\begin{algorithm}[H]
	\caption*{\textbf{SocialCircle}\boldmath $\pmb(\sigma_i, \tau_i,\mathcal L_i, k \pmb )$ Checking connectivity in the social circle $\mathcal L_i$} 
	\begin{algorithmic}
            \If{$\sigma_i(\mathtt V(\mathcal L_i))\neq\tau_i(\mathtt V(\mathcal L_i))$}
                \State Return ``NOT CONNECTED''
            \EndIf
        \If{$k < |\mathcal L_i|-1$ and $\mathcal L_i$ is a cycle}
            \If{the cyclic ordering of $\sigma_i$ and $\tau_i$ differ}
                \State Return ``NOT CONNECTED''
            \EndIf
            \EndIf

            \If{$k=1$}
            \If{$\mathcal L_i$ is bipartite}
            \If{$\sigma_i$ and $\tau_i$ do not share the same parity}
                \State Return ``NOT CONNECTED''
            \EndIf
        \ElsIf{$\mathcal L_i\cong\theta_0$}
            
             \hspace{1em}\parbox[t]{313pt}{If we have reached this point then $\sigma_i(\mathtt V(\mathcal L_i))=\tau_i(\mathtt V(\mathcal L_i))$.  Note that $\sigma_i|_{\mathcal L_i}$ and $\tau_i|_{\mathcal L_i}$ can then be viewed as vertices of $\operatorname{FS}(\mathcal L_i,B^{(1)}_{7-1})\cong \operatorname{FS}(\theta_0,B^{(1)}_{7-1})$, which is a specific graph on $7!$ vertices with $6$ connected components (characterised in \cite{Wilson}).\strut}
            \If{$\sigma_i|_{\mathcal L_i}$ and $\tau_i|_{\mathcal L_i}$ are in different components of $\operatorname{FS}(\mathcal L_i,B^{(1)}_{7-1})$}
                \State Return ``NOT CONNECTED''
            \EndIf
        \EndIf
        \EndIf
	\end{algorithmic} 
\end{algorithm}

We note that the above algorithm is efficient (i.e.\ it terminates in polynomial time). Let $n,e,m$ be the number of vertices, edges and social circles of $\mathcal G$. First, we note that the blocks and cut-vertices of $\mathcal G$ can be found in time $O(n+e)$ (\cite{paton}), after which the social circles can be returned in time $O(kn)$ (remember that the block graph is always a forest). The operation of ``moving'' the social butterflies to $\mathcal O_i$ requires computing paths between all social butterflies and social circles, which can be done in time $O(\min\{m,k\}(n+e))$ by either conflating $k$ points inside each social circle to a single node and running a Breadth-First-Search rooted at that node, or running a BFS directly from each social butterfly. Regarding \textbf{SocialCircle}, all the criteria take $O(|\mathcal L_i|)$ time to compute. Since $\sum_i |\mathcal L_i|<kn$, the overall running time of our algorithm is $O(\min\{m,k\}(n+e)+2kn)$. Note that this is $O(n^3)$ in the worst case (and the cubic lower bound of \cite{Thesis} still holds), but much faster on many graphs. 

\section{The number of components of the friends and strangers graph} 
\label{sec:structure}
The previous algorithm not only informs us on the connectivity between two particular configurations, but also shows the following:

\begin{cor}
\label{formula}
The number of connected components of $\operatorname{FS}(\mathcal G, \mathcal B_{n-k}^{(k)})$ is precisely

\[\binom{n-k}{|\mathcal L_1|-k,|\mathcal L_2|-k,\ldots, |\mathcal L_m|-k}\cdot \prod_{i=1}^m \lambda_i,\]

where $\lambda_i$ is defined to be equal to $1$, except in the following cases:

\begin{itemize}[label=\raisebox{0.25ex}{\tiny$\bullet$}]
    \item If $k < |\mathcal L_i|-1$ and $\mathcal L_i$ is a cycle, $\lambda_i\coloneqq \frac{|\mathcal L_i|!}{2}$.
    \item If $k=1$, then:
    \begin{itemize}
        \item If $\mathcal L_i$ is bipartite, but not a cycle, $\lambda_i\coloneqq 2$.
        \item If $\mathcal L_i\equiv \theta_0$, $\lambda_i\coloneqq 6$.
    \end{itemize}
\end{itemize}
\end{cor}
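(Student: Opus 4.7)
The plan is to derive the formula directly from the connectivity algorithm of \cref{connected components}: its equivalence classes are by construction the connected components of $\operatorname{FS}(\mathcal G,\mathcal B_{n-k}^{(k)})$, so counting components reduces to counting the independent invariants that the algorithm checks.

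First I would establish a factorization of the equivalence across the social circles. The algorithm's first step pushes all $k$ butterflies into the designated set $\mathcal O_i$ of a circle $\mathcal L_i$, a reversible operation because butterflies walk freely through $\mathcal G$, reversibly displacing asocial people. After this push, the configuration restricts to a vertex $\sigma_i|_{\mathcal L_i}$ of the local friends-and-strangers graph $\operatorname{FS}(\mathcal L_i,\mathcal B_{|\mathcal L_i|-k}^{(k)})$, and two global configurations are connected if and only if the restrictions are locally connected inside each $\mathcal L_i$. Since every $\mathcal L_i$ is well-lit by construction, the \nameref{thm:main} applied to $\mathcal L_i$ characterises local connectivity: the local friends-and-strangers graph $\operatorname{FS}(\mathcal L_i,\mathcal B_{|\mathcal L_i|-k}^{(k)})$ is connected outside precisely the three Wilson-type exceptions recorded by the $\lambda_i$.

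Next I would count. The invariants split into two orthogonal groups: the assignment of which $|\mathcal L_i|-k$ asocial people reside in each $\mathcal L_i$, and, inside each $\mathcal L_i$, a local Wilson-type invariant of $\sigma_i|_{\mathcal L_i}$. The first yields the multinomial $\binom{n-k}{|\mathcal L_1|-k,\ldots,|\mathcal L_m|-k}$, counting distributions of the $n-k$ asocial people among the social circles. The second contributes $\prod_i\lambda_i$, with each $\lambda_i$ being the local component count read off from the relevant exceptional case of \nameref{thm:Wilson} (and equal to $1$ whenever no exception applies). Multiplying yields the claimed formula.

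The main obstacle will be justifying the factorization rigorously. On one hand, one must verify that the distribution of asocial people among the circles is actually an invariant of each connected component, independent of the particular paths used to push butterflies into $\mathcal O_i$; this follows from reversibility of butterfly motion together with \cref{DF-hallways}, which prevents asocial people from crossing dark hallways and so pins each asocial person to the unique social circle it occupies once the butterflies have been pushed to $\mathcal O_i$. On the other hand, one must verify that every admissible pair (distribution, tuple of local Wilson-classes) is realised by some configuration, which reduces to an application of the \nameref{thm:main} inside each $\mathcal L_i$ to independently realise any prescribed local class.
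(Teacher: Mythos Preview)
Your proposal is correct and follows essentially the same route as the paper: both derive the count directly from the algorithm of \cref{connected components}, observing that the connected components are indexed by the partition of asocial people among the social circles together with the local Wilson-type invariants, and then multiply the multinomial by $\prod_i\lambda_i$. You are in fact more explicit than the paper about the two directions (well-definedness of the partition invariant for the lower bound, realisability of every admissible pair for the upper bound), which the paper compresses into a single sentence appealing to the algorithm's correctness.
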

Indeed, the collection of asocial people lying in $\mathcal L_i$ in the configuration $\sigma_i$, as $i$ ranges over $[m]$, forms a partition of all asocial people. As shown by our previous algorithm, two configurations can be reached from one another if and only if they agree on this partition and obey the additional constraints of \nameref{thm:Wilson}. This confirms that the previous formula is both a lower and an upper bound for the number of connected components.


\bibliography{references.bib}
\bibliographystyle{plain}

\end{document}